\numberwithin{equation}{section}
\date{}
\def\Ker{\text{\rm Ker}}
\def\Ga{\Gamma}
\def\bbr{\mathbb{F}}
\def\bbf{\mathbb{R}}
\def\bbn{\mathbb{N}}
\def\bbb{\mathbb{B}}
\def\half{\frac12}
\def\la{\lambda}
\def\suml{\sum\limits}
\theoremstyle{plain}
\newtheorem{thm}{Theorem}[section]
\newtheorem*{thm*}{Theorem}
\newtheorem{prop}[thm]{Proposition}
\theoremstyle{definition} 
\newtheorem*{definition*}{Definition}
\newtheorem*{thm1*}{Theorem A1}
\newtheorem*{thm2*}{Theorem A2}
\newtheorem{cor}[thm]
{Corollary}
\newtheorem*{claim*}{Claim}
\newtheorem{remark}[thm]{Remark}
\def\bbf{\mathbb{F}}
\def\bbr{\mathbb{R}}
\def\bbc{\mathbb{C}}
\def\bbh{\mathbb{H}}
\def\be{\begin{equation}}
\def\ee{\end{equation}}
\def\a{\alpha}
\def\vare{\varepsilon}
\def\ol{\overline}
\def\tle{\triangleleft}
\theoremstyle{remark}  
\begin{document}

\title{Finite simple groups of Lie type as expanders}
\author{Alexander Lubotzky}
\maketitle

\centerline{\it Dedicated to the memory of Beth Samuels who is deeply missed}

\baselineskip 16pt

\section{Introduction}

A finite $k$-regular graph $X,\; k\in\bbn$, is called an {\bf
$\vare$-expander} $(0< \vare \in\bbr)$, if for every subset of
vertices $A$ of $X$, with $|A| \le \half |X|$, $|\partial A|\ge
\vare |A|$ where $\partial A = \{ y\in X|$ distance $(y, A) =
1\}$.

The main goal of this paper is to prove:
\begin{thm} There exist $k\in\bbn$ and $0 < \vare \in\bbr$, such
that if $G$ is a finite simple group of Lie type, but not a Suzuki
group, then $G$ has a  set of $k$ generators $S$ for which the
Cayley graph $Cay(G;S)$ is an $\vare$-expander.

For short, we will say that these groups are uniform expanders or
expanders uniformly.
\end{thm}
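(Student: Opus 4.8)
The plan is to use the standard dictionary between expansion and representation-theoretic gaps: a family of $k$-regular Cayley graphs $\mathrm{Cay}(G;S)$ consists of uniform $\vare$-expanders precisely when the corresponding averaging (adjacency) operators on $\ell^2_0(G)$ (functions of mean zero) have a uniform spectral gap, which for us will come from a uniform lower bound on a Kazhdan-type constant. Two reduction tools drive everything. First, if $G=\langle H_1,\dots,H_m\rangle$ with $G=(H_1\cdots H_m)^N$ for bounded $m,N$ and each $H_i$ is an $\vare_0$-expander with respect to a generating set $S_i$, then $G$ is an $\vare$-expander with respect to $\bigcup_i S_i$, with $\vare=\vare(\vare_0,m,N)$. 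Second, relative property (T) lets almost-invariance propagate from abelian normal pieces. Since there are only finitely many Lie types, I would fix for each type a generating scheme and control the two unbounded parameters, the rank $\ell$ and the field $\bbf_q=\bbf_{p^e}$, separately; the whole point is to keep $k$ and the gap independent of $\ell$, $p$, and $e$.

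The engine for groups of rank at least $2$ is the relative property (T) of the affine pairs $(\mathrm{SL}_2(\bbf_q)\ltimes\bbf_q^2,\ \bbf_q^2)$, whose relative Kazhdan constant is bounded below independently of $q$ (a standard uniform bound, since the only $\mathrm{SL}_2(\bbf_q)$-invariant character of $\bbf_q^2$ is the trivial one). Every rank-$\ge 2$ group $\bg(\bbf_q)$ contains such affine subgroups attached to its roots and is generated by its root subgroups, so an $S$-almost-invariant vector is forced to be almost-invariant under each root unipotent and hence under all of $\bg(\bbf_q)$; a bounded generating set is obtained by taking one unipotent element per root together with a primitive element of each (cyclic) torus factor. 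For bounded rank this bootstrap uses boundedly many steps and is automatically uniform in $q$; for rank $\ell\to\infty$ the number of steps must be controlled uniformly in $\ell$, which is exactly Kassabov's theorem for the classical families $\mathrm{SL}_n,\mathrm{Sp}_{2n},\mathrm{SO}_n,\mathrm{SU}_n$, and I would cite it as a black box.

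This leaves the rank-$1$ groups, namely $A_1=\mathrm{PSL}_2(q)$, ${}^2A_2=\mathrm{PSU}_3(q)$, the Ree groups ${}^2G_2(q)$, and the Suzuki groups ${}^2B_2(q)$; here property (T) genuinely fails, so the affine bootstrap is unavailable. Each of the first three contains a copy of $\mathrm{SL}_2(\bbf_q)$ (or $\mathrm{PSL}_2$) and is boundedly generated by its conjugates, so by the first reduction tool it suffices to know the base case that $\mathrm{SL}_2(\bbf_q)$ is an $\vare_0$-expander with respect to a bounded generating set, uniformly in $q=p^e$. For prime fields this is Selberg's theorem applied to a fixed generating pair of $\mathrm{SL}_2(\bbz)$; for general $q$ it is the deeper spectral gap for bounded generating sets of $\mathrm{SL}_2$ over arbitrary finite fields supplied by the growth (product-theorem) methods, which I would invoke.

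I expect the principal obstacle to be uniformity across all parameters at once: the relative Kazhdan constants of the affine pairs and the base-case gap for $\mathrm{SL}_2(\bbf_q)$ must be bounded below independently of $p$ and $e$, Kassabov's high-rank bound must be uniform in $\ell$, and all of these must be amalgamated into a single pair $(k,\vare)$ valid for every type, with the rank-$1$ large-field base case being the deepest input. Finally, the Suzuki groups ${}^2B_2(2^{2n+1})$ must be excluded because, uniquely among the Lie type families, they contain no subgroup isomorphic to $\mathrm{SL}_2(\bbf_q)$ (their maximal subgroups are the Borel, a dihedral group, and the Suzuki-torus normalizers): neither the relative-property-(T) bootstrap nor the reduction to the $\mathrm{SL}_2$ base case can reach them, whereas the Ree groups survive precisely because they do contain $\mathrm{PSL}_2(\bbf_q)$.
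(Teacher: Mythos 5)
There is a genuine gap, and it sits exactly at the point the paper identifies as its main contribution: the base case that $SL_2(\bbf_{p^e})$ admits a \emph{bounded} generating set with a spectral gap uniform in \emph{both} $p$ and $e$. You dispose of this by saying you would ``invoke'' growth/product-theorem methods for $SL_2$ over arbitrary finite fields. But that is not an available black box here: the Bourgain--Gamburd machinery for prime fields does not transfer painlessly to $\bbf_{p^e}$ (subfield subgroups obstruct the non-concentration step, and one must still exhibit a specific bounded generating set rather than a random one), and the uniform-in-$q$ statements in that direction postdate and are independent of this paper. Everything else in your outline --- bounded products of expanders are expanders, relative property (T) for affine pairs in rank $\geq 2$, Kassabov for unbounded rank, reduction of the remaining rank-one groups to an $SL_2$ subgroup, exclusion of Suzuki groups for lack of an $SL_2(q)$ subgroup --- is either standard or matches the paper's reductions (the paper uses Hadad's theorem and a model-theoretic/Nikolov bounded-product argument in place of your relative-(T) bootstrap, but these are interchangeable in spirit). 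The theorem therefore stands or falls on the $SL_2(\bbf_{p^e})$ case, and your proposal contains no argument for it.

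For comparison, the paper's route to that base case is quite different and is worth internalizing. It takes the explicit Ramanujan Cayley graphs of $PSL_2(p^e)$ from the Ramanujan-complex construction of [LSV2] (resting on Drinfeld's proof of the characteristic-$p$ Ramanujan conjecture); these have an \emph{unbounded} generating set $S$ consisting of the conjugates of a single element $C$ by a non-split torus $T$ of $PGL_2(p)$. The key observation is that $S$ is contained in $PSL_2(p)\cdot C\cdot PSL_2(p)\;\cup\;PSL_2(p)\cdot C'\cdot PSL_2(p)$, so a vector almost invariant under the four elements $\{A,B,C,C'\}$ --- where $\{A,B\}$ expands $PSL_2(p)$ by Selberg's $\lambda_1\geq 3/16$ theorem --- is almost invariant under all of $S$, and the Ramanujan bound for $S$ then propagates almost-invariance to the whole group. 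This conversion of an unbounded Ramanujan generating set into a $4$-element expanding set is the idea your proposal is missing; without it (or a genuine substitute such as a proved uniform product theorem over all $\bbf_q$), the proof does not close.
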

Theorem 1.1 is new only for groups of small Lie rank: In
\cite{K1}, Kassabov proved that the family of groups
\begin{equation*} \{ SL_n(q)| 3 \le n \in\bbn,\;\; \,  q \text{ a\
prime\ power\ }\}\end{equation*} are uniform expanders. Nikolov
\cite{N}  proved that every {\bf classical} group is a bounded
product of $SL_n(q)$'s (with possible $n=2$, but the proof shows
that if the Lie rank is sufficiently high, say $\ge 14$, one can
use $SL_n(q)$ with $n\ge 3$). Bounded product of expander groups
are uniform expanders. Thus together, their results cover all
classical groups of high rank. So, our Theorem is new for
classical groups of small ranks as well as for the families of
exceptional groups of Lie type.

Theorem 1.1 gives the last step of the result conjectured in \cite{BKL} and announced in
\cite{KLN}:
\begin{thm}[\cite{KLN}] All  non-abelian finite simple groups, with the
possible exception of the Suzuki groups, are uniform expanders.
\end{thm}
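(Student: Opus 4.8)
The plan is to deduce this theorem from Theorem 1.1 by invoking the classification of finite simple groups (CFSG), which partitions the non-abelian finite simple groups into three families: the alternating groups $A_n$ $(n \ge 5)$, the groups of Lie type, and the $26$ sporadic groups. For the groups of Lie type other than the Suzuki groups, Theorem 1.1 already supplies a single pair $(k,\vare)$ making them uniform expanders, so it remains to treat the alternating and sporadic families and then to amalgamate the three conclusions into one uniform pair, as announced in \cite{KLN}.

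For the alternating groups I would invoke Kassabov's theorem that the family $\{A_n \mid n \ge 5\}$ (equivalently the symmetric groups $S_n$) consists of uniform expanders; this provides constants $(k_A,\vare_A)$. The $26$ sporadic groups require no real work: each individual finite group is an $\vare$-expander for some $\vare > 0$ with respect to any symmetric generating set, and since there are only finitely many of them one takes the minimum of these expansion constants together with a common generating-set size, obtaining a pair $(k_S,\vare_S)$ valid for all of them simultaneously.

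It then remains to merge the three pairs $(k_{\mathrm{Lie}},\vare_{\mathrm{Lie}})$, $(k_A,\vare_A)$, $(k_S,\vare_S)$ into one. Here I would use the elementary observation that enlarging a generating set cannot destroy vertex expansion: if $Cay(G;S)$ is an $\vare$-expander and $S \subseteq S'$, then for every $A$ a vertex $y \notin A$ at distance $1$ from $A$ in $Cay(G;S)$ remains at distance $1$ in $Cay(G;S')$, so $\partial A$ only grows and $Cay(G;S')$ is again an $\vare$-expander, now of degree $|S'|$. Setting $k = \max(k_{\mathrm{Lie}}, k_A, k_S)$, one pads each (symmetric) generating set with arbitrary additional distinct elements and their inverses up to size $k$ (possible once $|G| > k$, which excludes only finitely many groups that may be absorbed into the bounded sporadic list) and takes $\vare = \min(\vare_{\mathrm{Lie}}, \vare_A, \vare_S)$. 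This yields a single $(k,\vare)$ working for every non-abelian finite simple group with the possible exception of the Suzuki groups.

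The genuine mathematical content sits entirely in the two input theorems — Theorem 1.1 for the groups of Lie type and Kassabov's theorem for the alternating groups — rather than in the assembly. Accordingly, the main obstacle is not in this deduction but in Theorem 1.1 itself, whose proof for the small-rank classical and the exceptional groups is the new work of the paper; within the present argument the only point demanding care is the uniform normalization of the number of generators, which the padding observation above handles cleanly because the definition of $\vare$-expander is a purely vertex-boundary condition.
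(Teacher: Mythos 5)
Your proposal is correct and follows essentially the same route as the paper: Theorem 1.2 is deduced from Theorem 1.1 via the classification of finite simple groups, with the finitely many sporadic groups dismissed as trivial and the alternating groups handled by citing Kassabov \cite{K2}. The only addition is your explicit padding/normalization to a single pair $(k,\vare)$, a detail the paper leaves implicit.
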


By the classification of the finite simple groups, Theorem 1.1
covers all the simple groups except of finitely many sporadic
groups (for which the theorem is trivial) and the alternating
groups. The fact that  Theorem 1.2 holds for the alternating and
the symmetric groups is a remarkable result of Kassabov \cite{K2}.

The main new family covered by our method is $\{ PSL_2(q)|q\
\text{prime\ power\ }\}$. Unlike the result mentioned previously
whose proof used ingenious, but relatively elementary methods, the
proof for $PSL_2(q)$ will use some deep results from the theory of
automorphic forms. In particular, it will appeal to Selberg
$\lambda_1\ge \frac{3}{16}$ Theorem (\cite{Se},  see also
\cite[Chap.~4]{Lu}) and Drinfeld solution to the characteristic
$p$ Ramanujan conjecture ( \cite{Dr}).

For its importance, let us single it out as:
\begin{thm} The family $\{PSL_2(q)|q {\text \  prime\ power}\}$
forms a family of uniform expanders. \end{thm}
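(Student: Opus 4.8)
The plan is to translate the combinatorial expansion demanded by Theorem 1.3 into a uniform spectral gap for the mean-zero averaging operator on the relevant Cayley graphs (via the standard Cheeger/Alon--Milman dictionary between edge expansion and the second eigenvalue), and then to produce this gap from arithmetic. Writing $q = p^n$, I split the family $\{PSL_2(q)\}$ into two subfamilies according to the residue data: the ``base'' case $n=1$, handled over $\mathbb{Q}$ by Selberg's theorem \cite{Se}, and the case $n \ge 2$, handled over the function field $\mathbb{F}_p(t)$ by Drinfeld's theorem \cite{Dr}. Finitely many small $q$ (where $PSL_2(q)$ fails to be simple) may be discarded, and throughout I pass freely between $SL_2$ and $PSL_2 = SL_2/\{\pm I\}$, since dividing by a central subgroup of order $\le 2$ alters the expansion constant only by a bounded factor.

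For $n=1$ I fix the generating set $S_0 = \{\bigl(\begin{smallmatrix} 1 & 1 \\ 0 & 1\end{smallmatrix}\bigr)^{\pm 1}, \bigl(\begin{smallmatrix} 1 & 0 \\ 1 & 1\end{smallmatrix}\bigr)^{\pm 1}\}$ of $SL_2(\mathbb{Z})$, and for each prime $p$ reduce mod $p$ to obtain a generating set $\bar S_0$ of $SL_2(\mathbb{F}_p)$. The reduction identifies $Cay(SL_2(\mathbb{F}_p), \bar S_0)$ with the Schreier graph of $SL_2(\mathbb{Z})$ on $SL_2(\mathbb{Z})/\Gamma(p)$, and Selberg's bound $\lambda_1 \ge \tfrac{3}{16}$ for the congruence covers $\Gamma(p)\backslash \mathbb{H}$ translates, via the comparison between the Laplace--Beltrami spectrum of the modular surface and the combinatorial Laplacian (property $\tau$ for the congruence subgroups of $SL_2(\mathbb{Z})$), into a lower bound on the spectral gap of these Cayley graphs that is independent of $p$. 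This already gives uniform $\varepsilon$-expansion for $\{PSL_2(p)\}$, and simultaneously equips every $SL_2(\mathbb{F}_p)$ with an explicit bounded expanding generating set, which is the object I feed into the next step.

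For $n \ge 2$ I realize $PSL_2(p^n)$ arithmetically: a cocompact arithmetic lattice $\Gamma$ in $PGL_2\bigl(\mathbb{F}_p((1/t))\bigr)$ coming from a quaternion division algebra over $\mathbb{F}_p(t)$ acts on the $(p+1)$-regular Bruhat--Tits tree, and reduction modulo an irreducible polynomial of degree $n$ yields $PSL_2(\mathbb{F}_{p^n})$. Drinfeld's solution of the characteristic-$p$ Ramanujan conjecture \cite{Dr} implies that the resulting $(p+1)$-regular quotient graphs on $PSL_2(\mathbb{F}_{p^n})$ are Ramanujan, so their nontrivial adjacency eigenvalues are at most $2\sqrt{p}$; the normalized gap $1 - \tfrac{2\sqrt{p}}{p+1}$ is then bounded below by an absolute positive constant for all $p$, uniformly in $n$. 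The difficulty, and the main obstacle, is that this graph has degree $p+1$, which is \emph{unbounded} as $p\to\infty$, whereas Theorem 1.3 demands a single degree $k$.

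The degree is cut down using the expanding subgroup manufactured in the base case. The $p+1$ tree-neighbours of the base vertex are permuted transitively by the residual copy of $SL_2(\mathbb{F}_p)$ (they are the $p+1$ points of $\mathbb{P}^1(\mathbb{F}_p)$), so inside $SL_2(\mathbb{F}_{p^n})$ the $p+1$ Ramanujan generators all lie in a single double coset $SL_2(\mathbb{F}_p)\, s\, SL_2(\mathbb{F}_p)$ for one fixed element $s$. Hence $S = \bar S_0 \cup \{s\}$ already generates and has bounded size $|\bar S_0|+1$. I would finish with a spectral combination lemma: since $Cay(SL_2(\mathbb{F}_p), \bar S_0)$ has a uniform gap, averaging over $\bar S_0$ approximates averaging over the whole subgroup $SL_2(\mathbb{F}_p)$ up to a bounded factor, so the averaging operator attached to $S$ dominates a fixed positive multiple of the operator that averages over $SL_2(\mathbb{F}_p)\, s\, SL_2(\mathbb{F}_p)$, whose gap is controlled by Drinfeld. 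Chaining the two gaps yields a spectral gap for $Cay(SL_2(\mathbb{F}_{p^n}), S)$ that is uniform in both $p$ and $n$, completing the bounded-degree expansion. The delicate points are precisely this transfer of a large-degree spectral gap to the bounded set $S$ through the expanding subgroup, and the verification that every estimate is genuinely independent of $p$ as well as of $n$.
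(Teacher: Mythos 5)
Your overall architecture coincides with the paper's: Selberg for the prime fields, Drinfeld via an arithmetic lattice acting on the $(p+1)$-regular tree for the field extensions, and then a reduction of the unbounded Ramanujan generating set to a bounded one by routing it through the expanding subgroup $PSL_2(p)$ and finitely many double cosets. The gap is in the one step the paper flags as the crux: how you place the $p+1$ Ramanujan generators into boundedly many $PSL_2(\mathbb{F}_p)$-double cosets. You justify this by saying that the $p+1$ neighbours of the base vertex of the tree are permuted transitively by ``the residual copy of $SL_2(\mathbb{F}_p)$'' acting on $\mathbb{P}^1(\mathbb{F}_p)$. That transitive action belongs to the vertex stabilizer $PGL_2(\mathbb{F}_p[[1/t]])$ inside the ambient locally compact group; it does not descend to a conjugation action inside the finite quotient, because the cocompact lattice $\Gamma$ meets the vertex stabilizer trivially (in the relevant constructions it acts simply transitively on the vertices). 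From the tree picture alone you only get that the generators lie in one double coset $K\gamma_1 K$ of the \emph{compact} stabilizer $K$, which says nothing about double cosets of the finite subgroup $SL_2(\mathbb{F}_p)\le SL_2(\mathbb{F}_{p^n})$. For a generic arithmetic lattice (e.g.\ Morgenstern's) there is no reason the $p+1$ generators fall into $O(1)$ such double cosets; this is precisely why the paper uses the explicit construction of [LSV2] rather than [Mo]: there the generators are, by construction, the conjugates $ubu^{-1}$ of a single element by the non-split torus $T\cong\mathbb{F}_{p^2}^*/\mathbb{F}_p^*$ of $GL_2(p)$, and that torus genuinely sits inside the finite group, so the conjugation really takes place there.

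Even after this repair your count is off by one: $T$ lies in $PGL_2(p)$, and $T_1=T\cap PSL_2(p)$ has index $2$ in $T$ for $p$ odd, so the Ramanujan generating set splits into two $T_1$-orbits and hence (a priori) two double cosets $PSL_2(p)\cdot C\cdot PSL_2(p)$ and $PSL_2(p)\cdot C'\cdot PSL_2(p)$; a single extra element $s$ need not suffice, which is why the paper's bounded generating set is $\{A,B,C,C'\}$ with two double-coset representatives. The remainder of your argument --- chaining the $\{A,B\}$-gap on $PSL_2(p)$ with the Ramanujan gap on the unbounded set, and passing freely between $SL_2$ and $PSL_2$ --- matches Propositions 2.1 and 2.3 of the paper and is sound, modulo the further point (Theorem 7.1 of [LSV2], used for $p^e>17$) that the finite quotient of $\Gamma$ can actually be arranged to be $PSL_2(p^e)$ itself rather than a strictly larger group between $PSL_2(p^e)$ and $PGL_2(p^e)$, which would otherwise contribute extra ``trivial'' eigenvalues.
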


Let us mention right away that Theorem 1.3 was known before for
several subfamilies; e.g.\ for $\{ PSL_2(p)|p $ prime$\}$ (see
\cite[Chap.\ 4]{Lu} ] or $\{PSL_2(p^r) | p$ a fixed prime and
$r\in\bbn\}$ ( \cite{Mo}). The main novelty is to make them expanders uniformly for all $p $
and all $r$. To this end we will use the representation theoretic
reformulation of the expanding property (see \S 2) as well as the
new {\bf explicit} constructions of Ramanujan graphs in
\cite{LSV2} as special cases of Ramanujan complexes.  We stress
that the explicit construction there is crucial for our method and
not only the theoretical construction of \cite{LSV1}.
This will be shown in \S 3.

The case of $SL_2$ is a key step for the other groups of Lie type:
A result of Hadad (\cite{H1}, which is heavily influenced by
Kassabov \cite{K1}) enables one to deduce $SL_n(n\geq 2)$ from
$SL_2$.  Then in \S 4, we use a model theoretic argument to show
that simple groups of Lie type of bounded rank (including the
exceptional families except of the Suzuki groups) are bounded
products of $SL_2$'s.  Together with Nikolov's result mentioned
above, Theorem 1.1 is then fully deduced.

The Suzuki groups have to be excluded as they do not contain a copy
of $(P)SL_2(q)$ for any $q$, but we believe Theorem 1.2 holds for
them as well.

\medskip

\noindent{\bf Acknowledgment:} \  The author is indebted to
E.~Hrushovski,
Y.~Shalom and U.~Vishne for their useful advice,
and to the NSF, ERC and BSF (US-Israel) for their support.

\section{Representation theoretic reformulation}

It is well known (cf.\ \cite[Chap.\ 4]{Lu}) that expanding
properties of Cayley graphs $Cay(G; S)$ can be reformulated in the
language of the representation theory of $G$. For our purpose we
will need to consider also cases for which $S$ is not of bounded
size, in spite of the fact that our final result deals with
bounded $S$.  We therefore need a small extension of some standard
results, for which we need some notation:

The normalized adjacency matrix of a connected $k$-regular graph
$X$ is defined to be $\triangle = \frac 1k A$ where $A$ is the
adjacency matrix of $X$. The eigenvalues of $\triangle$ are in the
interval $[-1, 1]$. The largest eigenvalue in absolute value in
$(-1, 1)$ is denoted $\la(X)$.

For a group $G$, a set of generators $S$ and $\a > 0$, we denote
by $I(\a, G,S)$ the statement:

For every unitary representation $(V,\rho)$ of $G$, every $v \in
V$ and every $0 < \delta \in\bbr$, if $\| \rho(s) v-v\| < \delta$
for each $s\in S$, then $\|\rho(g) v-v\| < \a \delta$ for every
$g\in G$, (i.e., a vector $v$ which is ``$S$-almost invariant" is
also ``$G$-almost invariant".)

Note that the statement $I(\a, G, S)$ refers to all the unitary
 representations of $G$, wherever they have invariant vectors or
 not.

\begin{prop}


\noindent{\normalfont (i)} For every $\a > 0$ there is $\vare = \vare
(\a) > 0$ such that if $G$ is a finite group, $S$ a set of generators
and $I(\a, G, S)$ holds, then $Cay (G, S)$ is $\vare$-expander.
\vskip .06in

\noindent{\normalfont (ii)}  For every $\eta > 0$, there exists $\a = \a(\eta)$ such that if $G$ is a finite group with a set of
generators $S$, and $\la(Cay(G,S))<1-\eta$, then $I(\a, G,S)$
holds.
\vskip .06in

\noindent{\normalfont (iii)} If $k=|S|$ is bounded then the implications in {\normalfont (i)(a)} can be
reversed. (So $Cay(G,S)$ is expander iff every ``$S$-almost
invariant" vector is also ``$G$-almost invariant".)
\end{prop}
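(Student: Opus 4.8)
The plan is to reduce all three parts to one identity together with the Peter--Weyl decomposition of the regular representation. Assume $S=S^{-1}$, as one may for an undirected Cayley graph, and for a unitary representation $(V,\rho)$ set $M_\rho=\frac1k\sum_{s\in S}\rho(s)$, a self-adjoint contraction. Expanding $\|\rho(s)v-v\|^2=2\|v\|^2-2\,\mathrm{Re}\,\langle\rho(s)v,v\rangle$ and summing over $s$ gives the Rayleigh identity
\[
\langle (I-M_\rho)v,v\rangle=\frac{1}{2k}\sum_{s\in S}\|\rho(s)v-v\|^2,
\]
which converts spectral data into averaged displacement and back. On $\ell^2(G)$ the operator $M_\rho$ is exactly $\triangle$; since $S$ generates, the eigenvalue $1$ occurs only on the constants, so $\ell^2_0(G)$ has no $G$-invariant vectors, and by Peter--Weyl its eigenvalues are precisely those of the operators $M_\pi=\frac1k\sum_{s}\pi(s)$ as $\pi$ ranges over the nontrivial irreducibles. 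Consequently a bound on the top of the spectrum of $\triangle$ on $\ell^2_0(G)$ is the same as such a bound for $M_\rho$ on every representation with no invariant vectors.

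For (i) I would first turn $I(\a,G,S)$ into a Kazhdan-type estimate: applying $I$ to $V=\ell^2_0(G)$, averaging the conclusion $\|\rho(g)v-v\|<\a\delta$ over $g\in G$, and using that $v$ has zero $G$-average, yields $\|v\|\le\a\,\max_{s}\|\rho(s)v-v\|$. Now fix $A\subseteq G$ with $|A|\le\half|G|$ and let $v$ be the image of $\mathbf{1}_{A}$ in $\ell^2_0(G)$, so that $\|v\|^2\ge\half|A|$ and $\rho(s)v-v=\mathbf{1}_{As^{-1}}-\mathbf{1}_{A}$. Taking the generator $s_0$ that realizes the maximal displacement and noting that $x\mapsto xs_0$ injectively sends each $x\in A$ with $xs_0\notin A$ into $\partial A$, one gets $\|\rho(s_0)v-v\|^2=2\,|A\setminus As_0^{-1}|\le 2|\partial A|$; combined with $\|v\|\le\a\max_s\|\rho(s)v-v\|$ this gives $|\partial A|\ge\frac{1}{4\a^{2}}|A|$, so $\vare=\frac{1}{4\a^{2}}$ works. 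The point worth stressing is that working through a single generator rather than the average over $S$ is what keeps $\vare$ independent of $k$.

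For (ii) I would run the identity the other way. If $\la(Cay(G,S))<1-\eta$, then by the remark above the top of the spectrum of $M_\rho$ on any invariant-vector-free representation is $\le 1-\eta$ (eigenvalues near $-1$, where $\rho(s)v=-v$, only enlarge the left-hand side of the identity and are harmless). Hence for such $v$, $\frac{1}{2k}\sum_s\|\rho(s)v-v\|^2=\langle(I-M_\rho)v,v\rangle\ge\eta\|v\|^2$, and the elementary bound $\max_s\|\rho(s)v-v\|^2\ge\frac1k\sum_s\|\rho(s)v-v\|^2$ upgrades this to $\max_s\|\rho(s)v-v\|\ge\sqrt{2\eta}\,\|v\|$. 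For a general representation, splitting $v=v_0+v_1$ with $v_0$ invariant leaves every $\rho(s)v-v$ and $\rho(g)v-v$ unchanged, and $\|\rho(g)v_1-v_1\|\le2\|v_1\|$ then gives $I(\a,G,S)$ with $\a=\sqrt{2/\eta}$; again dividing the summed bound by $k$ keeps $\a$ free of $k$.

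For (iii) the forward direction is (i), and the content is its converse: a bounded-degree vertex expander satisfies $I(\a,G,S)$. I would obtain a spectral gap and then quote the argument of (ii), which used only a bound on the top eigenvalue. Vertex expansion gives edge expansion for free, $|E(A,A^{c})|\ge|\partial A|\ge\vare|A|$, so the normalized conductance is at least $\vare/k$; the hard direction of the discrete Cheeger inequality then forces the top nontrivial eigenvalue of $\triangle$ below $1-\eta$ with $\eta$ of order $(\vare/k)^2$, whence $I(\a,G,S)$ with $\a$ of order $k/\vare$. The main obstacle is precisely this Cheeger step---manufacturing a sparse cut from an almost-top eigenvector by level-set rounding---and it is here that the boundedness of $k$ is indispensable: vertex and edge expansion differ by a factor of $k$, so in unbounded degree a graph can expand at every vertex while $\triangle$ has eigenvalues arbitrarily close to $1$, and no such converse can hold.
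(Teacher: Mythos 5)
Your proposal is correct and follows essentially the same route as the paper: part (i) is the standard Kazhdan-constant-to-expansion computation with indicator functions (the content of [Lu, Prop.\ 3.3.1], which the paper cites), part (ii) is the Rayleigh-quotient averaging argument behind the paper's inequality $(\ast)$ together with the splitting $v=v_0+v_1$, and part (iii) is the Cheeger-type equivalence of [Lu, Thm.\ 4.3.2]. The only substantive difference is that you supply explicitly the details the paper delegates to [Lu], including the correct observation that the boundedness of $|S|$ is exactly what makes the converse in (iii) possible.
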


\begin{proof}

We note first that property $I(\a, G, S)$ implies that there
exists $\beta = \beta(\a) > 0$, such that for every unitary
representation $(V, \rho)$ of $G$ without a non-zero invariant
vector, and every $v \in V$ with $\| v \| = 1$, $\| \rho(s) v-v\|
\ge \beta$ for some $s \in S$.  Indeed, take $\beta <
\frac{1}{2\a}$ and so if $\| \rho (s) v-v \| < \beta$ for every $s
\in S$, then $I(\a, G, S)$ implies that $\| \rho (g) v-v\| < \frac
12$ for every $g\in G$.  This implies that $\ol v = \frac {1}{|G|}
\suml_{g \in G} \rho(g) v$ which is clearly a $G$-invariant
vector, is non-zero since $\| \ol v - v \| < \frac 12$. This
contradicts our assumption that $V$ does not contain an invariant
vector.

Altogether, $I(\a, G, S)$ implies the usual ``property $T$"
formulation and so the standard proof of Proposition 3.3.1 of
\cite{Lu} applies to deduce that $Cay(G,S)$ is an $\vare$-expander
for some $\vare = \vare(\beta(\a))$.  This proves (i). The
proof of (ii) is also a small modification of the standard
equivalences (see \cite[Theorem 4.3.2]{Lu}):  As it is well known,
a normalized eigenvalue gap (i.e.\ $\la(Cay (G, S)) < 1-\eta)$
implies an ``average expanding", i.e.\ if $(V,\rho)$ does not
contain an invariant vector, then
\begin{equation*}\tag{$\ast$} \frac{1}{|S|} \suml_{s\in S} \|
\rho(s) v-v\| \ge \eta' \| v \|
\end{equation*} (where $\eta'$ depends only on $\eta$). Note, that when $S$ is
unbounded, this is a stronger property than ``expanding" which
gives that for one $s\in S$, $\| \rho(s) v-v\| \ge \eta^{''} \| v
\|$ (for $\eta^{''} = \eta^{''} (\eta))$. Now, assume $(V, \rho)$
is an arbitrary unitary representation space of $G$ and $v \in V$,
of norm one, is $\delta$-invariant under $S$ for some $\delta <
\eta'$. Then $(\ast)$ implies that a large portion of $v$ is in
the space $V^G$ of $G$-fixed points. Hence $v$ is $G$-almost
invariant as needed.

Part (iii) is just the standard equivalences as in [Lu, Theorem
4.3.2].\end{proof}

An easy corollary of Proposition 2.1 is that `bounded products of expanders are expanders' or in a precise form:

\begin{cor} Let $G$ be a finite group and $G_i, i=1,\ldots, \ell$ a family of subgroups of $G$, each comes with a set of generators $S_i\subseteq G_i, i=1,\ldots, \ell$, with $\vert S_i\vert \leq r$.  Assume $G=G_1\cdot\ldots\cdot G_\ell$, i.e., every $g\in G$ can be written as $g=g_1g_2\ldots g_\ell$, with $g_i\in G_i$.  If all $Cay (G_i;S_i)$ are $\delta$-expanders, then $Cay (G;S)$ is $\vare$-expander for $S=\bigcup\limits^\ell_{i=1} S_i$ and $\vare$ which depends only on $\delta$ and $\ell$.
\end{cor}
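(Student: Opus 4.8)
The plan is to pass to the representation-theoretic reformulation of Proposition 2.1 and to prove the product statement at the level of the property $I(\a,\cdot,\cdot)$, which behaves well both under restriction to subgroups and under multiplication of group elements. First I would record that $S=\bigcup_{i=1}^\ell S_i$ generates $G$: each $G_i=\langle S_i\rangle$ and $G=G_1\cdots G_\ell$, so $Cay(G;S)$ is connected and the conclusion makes sense. Since each $|S_i|\le r$ is bounded, the equivalences of Proposition 2.1(ii),(iii) convert the hypothesis that $Cay(G_i;S_i)$ is a $\delta$-expander (via a uniform spectral gap $\la(Cay(G_i;S_i))<1-\eta$ with $\eta=\eta(\delta,r)$) into the property $I(\a_0,G_i,S_i)$ for a single constant $\a_0=\a_0(\delta,r)$ valid for every $i$.

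The key step is then to deduce $I(\a,G,S)$ from the individual $I(\a_0,G_i,S_i)$. Let $(V,\rho)$ be an arbitrary unitary representation of $G$, let $v\in V$, and suppose $\|\rho(s)v-v\|<\delta'$ for all $s\in S$. For each fixed $i$, the restriction $\rho|_{G_i}$ is a unitary representation of $G_i$, and $v$ is $\delta'$-almost invariant under $S_i\subseteq S$; hence $I(\a_0,G_i,S_i)$ applied to $\rho|_{G_i}$ yields $\|\rho(g_i)v-v\|<\a_0\delta'$ for every $g_i\in G_i$. Now write an arbitrary $g\in G$ as $g=g_1g_2\cdots g_\ell$ with $g_i\in G_i$ and telescope along the partial products:
\[
\rho(g)v-v=\suml_{i=1}^\ell \rho(g_1\cdots g_{i-1})\bigl(\rho(g_i)v-v\bigr).
\]
Since $\rho$ is unitary, each summand has norm $\|\rho(g_i)v-v\|<\a_0\delta'$, so $\|\rho(g)v-v\|<\ell\a_0\delta'$. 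This is precisely $I(\a,G,S)$ with $\a=\ell\a_0$. Finally, Proposition 2.1(i) turns $I(\a,G,S)$ into the assertion that $Cay(G;S)$ is an $\vare$-expander with $\vare=\vare(\a)=\vare(\ell\a_0)$, depending only on $\delta$, $\ell$ (and the bound $r$), as claimed.

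I expect the main point — more a subtlety than a genuine obstacle — to be the insistence on working with $I(\cdot,\cdot,\cdot)$ rather than with a bare spectral gap or a direct distance estimate in $Cay(G;S)$. A telescoping estimate on graph distances does not work, because the factorization $g=g_1\cdots g_\ell$ gives no control on the word length of $g$ in $S$; the product structure is visible only at the level of almost-invariant vectors. Using $I$ makes the argument transparent: the property quantifies over \emph{all} unitary representations, whether or not they possess nonzero invariant vectors, so it restricts cleanly to each $G_i$, and the number $\ell$ of factors enters the final bound only linearly. The one thing to watch is the bookkeeping of constants, so that $\a$, and hence $\vare$, is seen to depend on $\delta,\ell,r$ alone and in particular not on $|G|$; this is immediate from the displayed estimate.
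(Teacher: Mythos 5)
Your proposal is correct and follows essentially the same route as the paper: the paper's own (very terse) proof likewise converts the expander hypothesis on each $G_i$ into $S_i$-almost-invariance implying $G_i$-almost-invariance via Proposition 2.1(iii), passes to $G$-almost-invariance using the product decomposition, and concludes with Proposition 2.1(i). Your telescoping identity is just the explicit form of the step the paper leaves implicit.
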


\begin{proof}  If $(V,\rho)$ is a unitary representation of $G$, and $v\in V$ is a vector which is almost invariant under $S$, then it is almost invariant under each of the subgroups $G_i$ (by (2.1)(iii)) and as $G$ is a product of them, it is also almost invariant by $G$.  Now use (2.1)(i) to deduce the Corollary.
\end{proof}

Let us mention here another fact that will be used freely later.
The following Proposition is a special case of a more general
result in \cite{H2}:
 \begin{prop} Let $\{ G_i\}_{i \in I}$ be a family of perfect
finite groups (i.e. $[G_i, G_i]=G_i$) with sets of generators
$S_i$. Assume $\pi_i:\tilde G_i\to G_i$ is a central perfect cover
of $G_i$ and $\tilde S_i \subset \tilde G_i$ a subset for which
$\pi(\tilde S_i) = S_i$. If $Cay(G_i, S_i)$ are uniformly
expanders, then so are $Cay(\tilde G_i, \tilde S_i)$.
\end{prop}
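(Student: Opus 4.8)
The plan is to translate everything into the representation-theoretic language of Proposition 2.1 and to establish property $I$ uniformly. Since the generating sets have bounded size, set $k := |\tilde S_i|$; by Proposition 2.1 the hypothesis that the $Cay(G_i, S_i)$ are uniform expanders is equivalent to the existence of a single $\a > 0$ with $I(\a, G_i, S_i)$ for all $i$, and by the same Proposition it suffices to produce a single $\tilde\a$ with $I(\tilde\a, \tilde G_i, \tilde S_i)$ for all $i$. First I would record that $\tilde S_i$ in fact generates $\tilde G_i$: if $H = \langle \tilde S_i\rangle$ and $Z_i := \ker\pi_i$, then $\pi_i(H) = \langle S_i\rangle = G_i$, so $HZ_i = \tilde G_i$ with $Z_i$ central; as commutators are unchanged modulo the center, $[\tilde G_i,\tilde G_i]\subseteq H$, and perfectness forces $H = \tilde G_i$.

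Now fix a unitary representation $(\tilde V,\tilde\rho)$ of $\tilde G_i$ and a vector $v$ with $\|\tilde\rho(\tilde s)v - v\| < \delta$ for every $\tilde s\in\tilde S_i$. Since $Z_i$ is central and abelian, $\tilde V$ splits orthogonally into the joint eigenspaces $\tilde V_\chi$ on which $Z_i$ acts through a character $\chi$, and every $\tilde\rho(\tilde s)$ preserves this decomposition. Writing $v = v_0 + v_1$ with $v_0\in\tilde V_1$ (the part where $Z_i$ acts trivially) and $v_1$ in the sum of the $\tilde V_\chi$ with $\chi\ne 1$, the orthogonality shows that $v_0$ and $v_1$ are each $\delta$-almost invariant under every $\tilde s$. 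On $\tilde V_1$ the representation kills $Z_i$, hence factors through $\tilde G_i/Z_i = G_i$; thus $v_0$ is a $\delta$-almost $S_i$-invariant vector of a genuine $G_i$-representation, and $I(\a, G_i, S_i)$ gives $\|\tilde\rho(\tilde g)v_0 - v_0\| < \a\delta$ for all $\tilde g\in\tilde G_i$.

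The crux, and the main obstacle, is the nontrivial part $v_1$, where I must bound $\|v_1\|$ by $O(\delta)$ uniformly; equivalently I need a uniform spectral gap on representations with nontrivial central character, to which the given $G_i$-expansion does not directly apply because these are only \emph{projective} $G_i$-representations. The clever ingredient I would use is a tensor-square trick: for an irreducible $\pi = \tilde V_\chi$ with $\chi\ne 1$ and dimension $d$, the representation $\pi\otimes\overline\pi$ has trivial central character, so it is an honest $G_i$-representation; and for a unit vector $w\in\pi$ the product vector $w\otimes\overline w$ is $2\delta_w$-almost $S_i$-invariant, where $\delta_w = \max_{\tilde s}\|\pi(\tilde s)w - w\|$. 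Applying $I(\a, G_i, S_i)$, the vector $w\otimes\overline w$ lies within $2\a\delta_w$ of the $\tilde G_i$-invariant subspace of $\pi\otimes\overline\pi$; by Schur's lemma that subspace is the line through the maximally entangled vector, with which $w\otimes\overline w$ has overlap exactly $1/\sqrt d$. Since a perfect group has no nontrivial one-dimensional representations, $\chi\ne 1$ forces $d\ge 2$, and comparing $1 - 2\a\delta_w \le 1/\sqrt d \le 1/\sqrt2$ yields a uniform bound $\delta_w \ge c_0 := (1 - 1/\sqrt2)/(2\a)$. Hence $\sum_{\tilde s}\|\pi(\tilde s)w - w\|^2 \ge c_0^2\|w\|^2$ on every nontrivial irreducible; summing over an orthogonal decomposition of the nontrivial part into irreducibles gives $\sum_{\tilde s}\|\tilde\rho(\tilde s)v_1 - v_1\|^2 \ge c_0^2\|v_1\|^2$, whence $\|v_1\| < (\sqrt k/c_0)\,\delta$. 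Combining the two parts, $\|\tilde\rho(\tilde g)v - v\| \le \a\delta + 2\|v_1\| < (\a + 2\sqrt k/c_0)\,\delta$ for every $\tilde g$, so $I(\tilde\a,\tilde G_i,\tilde S_i)$ holds with $\tilde\a = \a + 2\sqrt k/c_0$ independent of $i$, and Proposition 2.1(i) finishes the proof. The genuinely delicate step is precisely the passage to $\pi\otimes\overline\pi$ together with the use of perfectness to force $d\ge 2$, which is what separates the product vector from the entangled invariant vector and thereby rules out almost-invariant vectors in the nontrivial central sector.
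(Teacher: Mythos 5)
Your proof is correct, but note that the paper does not actually prove this proposition: it is stated as ``a special case of a more general result in \cite{H2}'' (Hadad's preprint on Kazhdan constants of group extensions), so there is no in-text argument to compare against. Your write-up supplies a complete, self-contained proof within the paper's own framework (Proposition 2.1 and the statements $I(\a,G,S)$), and all the steps check out: the central-character decomposition $v=v_0+v_1$ is preserved by the $\tilde\rho(\tilde s)$ and does not increase the almost-invariance defect; the trivial-character part is handled by $I(\a,G_i,S_i)$ since that subrepresentation factors through $G_i$; and the tensor-square argument correctly reduces the nontrivial central sector to an honest $G_i$-representation, with Schur's lemma identifying the invariant line, the overlap $1/\sqrt d$ computed correctly, and perfectness of $\tilde G_i$ used exactly where it is needed (once to show $\tilde S_i$ generates, once to force $d\ge 2$). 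This $\pi\otimes\overline\pi$ device is essentially the standard mechanism behind results of this type (it is in the spirit of Hadad's argument and of quantitative treatments of Kazhdan constants under central extensions), so you have not so much diverged from the paper as filled in a proof the paper delegates to a reference. One small point worth making explicit: translating ``uniform expanders'' into a uniform $I(\a,G_i,S_i)$ via Proposition 2.1(ii)--(iii) uses that the $|S_i|$ are bounded, which is implicit in the paper's notion of a uniform ($k$-regular) expander family; your constant $\tilde\a=\a+2\sqrt k/c_0$ then correctly depends only on $\a$ and $k$, as required for uniformity.
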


The Proposition shows that proving uniform expanding for finite
simple groups or for their central extensions is the same problem.
So  a family of groups of the form $PSL_d(q)$ are expanders iff
$SL_d(q)$ are.

\section{$SL_2$: Proof of Theorem 1.3}

The goal of this section is to show that all the groups $\{
SL_2(q)|q \ \text{prime\ power }\}$ (and hence also $PSL_2(q)$)
are uniformly expanders. Let us recall
\begin{thm}
The Cayley graphs \newline $Cay (PSL_2(p)$;
$ \left\{ A= \left(\begin{matrix}1 & 1
\\0 &1\end{matrix}\right),\;  B = \left(\begin{matrix} 0 & 1\\-1
&0\end{matrix}\right)\right\} )$, for $p$ prime, are 3-regular
uniform expanders.
\end{thm}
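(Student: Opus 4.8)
The plan is to realize these finite Cayley graphs as congruence quotients of the modular group and to import a uniform spectral gap from the theory of automorphic forms. First I would observe that $A$ and $B$ are, up to inverses, the standard generators $T=\begin{pmatrix}1&1\\0&1\end{pmatrix}$ and $S=\begin{pmatrix}0&-1\\1&0\end{pmatrix}$ of $\Ga=SL_2(\bbz)$, so that $\{A,B\}$ generates $\Ga$ and their reductions modulo $p$ generate $SL_2(p)$, hence $PSL_2(p)$. Since $B^2=-I$ becomes trivial in $PSL_2(p)$, the element $B$ is an involution there, so $B=B^{-1}$ and the symmetric generating set is $\{A,A^{-1},B\}$, which yields a $3$-regular graph for every odd prime $p$.

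By Proposition 2.1, proving uniform expansion is equivalent to exhibiting a constant $\eta>0$, independent of $p$, with $\la(Cay(PSL_2(p);\{A,B\}))<1-\eta$; equivalently, to showing that every $\{A,B\}$-almost invariant vector in any unitary representation of $PSL_2(p)$ is already almost invariant under the whole group. This is precisely the assertion that $\Ga$ has property $\tau$ with respect to its principal congruence subgroups $\{\Ga(p)\}$, i.e.\ a uniform spectral gap for the averaging operator $\frac13(\rho(A)+\rho(A^{-1})+\rho(B))$ across the quotients $\Ga/\Ga(p)\cong PSL_2(p)$.

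The source of this uniform gap is Selberg's theorem. The congruence cover $\Ga(p)\backslash\bbh$ of the modular surface carries the hyperbolic Laplacian, and Selberg's bound $\la_1\geq\frac{3}{16}$ gives a lower bound on its smallest positive eigenvalue that does not depend on $p$. In representation-theoretic terms, this says that the nontrivial irreducible constituents of $L^2_0(\Ga(p)\backslash PSL_2(\bbr))$ are bounded away from the trivial representation of $G=PSL_2(\bbr)$ in the Fell topology, uniformly in $p$.

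The main obstacle, and the step I would spend the most care on, is transferring this analytic gap on the continuous group $G$ to a combinatorial gap for the discrete averaging operator built from the fixed elements $A,B$. Here I would use that the representations of $PSL_2(p)$ occurring in $L^2(\Ga(p)\backslash G)$ are governed by the same automorphic representations controlled by Selberg, so that a uniform distance from the trivial representation forces the matrix coefficients $\langle\rho(A)v,v\rangle$ and $\langle\rho(B)v,v\rangle$ to be bounded away from $1$ on the orthogonal complement of the $G$-fixed vectors, via quantitative bounds on spherical functions (the fact that $\la_1\geq\frac{3}{16}$ keeps the representations at a definite distance from the trivial one). This yields $\|\frac13(\rho(A)+\rho(A^{-1})+\rho(B))\|<1-\eta$ on the complement of the invariants, with $\eta$ depending only on the Selberg constant, and Proposition 2.1(i) then converts this uniform gap into uniform $\vare$-expansion.
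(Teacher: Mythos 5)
Your proposal is correct and is essentially the proof the paper relies on: the paper gives no argument of its own for Theorem 3.1 but cites \cite[Theorem 4.4.2]{Lu}, whose proof is exactly your plan --- realize $Cay(PSL_2(p);\{A,B\})$ as the congruence quotients $\Gamma/\Gamma(p)$ of the modular group and extract a $p$-independent spectral gap from Selberg's $\lambda_1\ge\frac{3}{16}$ theorem. The one caveat concerns the transfer step you rightly single out as the crux: in \cite{Lu} (following Brooks and Burger) the analytic gap on $\Gamma(p)\backslash\bbh$ is converted into a combinatorial gap by a geometric comparison of Rayleigh/Cheeger quotients between the surface and the graph (equivalently, by inducing an $\{A,B\}$-almost-invariant vector of $\ell^2_0(\Gamma/\Gamma(p))$ up to an almost-invariant vector of $L^2_0(\Gamma(p)\backslash PSL_2(\bbr))$ using a fixed fundamental domain), not by spherical-function bounds on matrix coefficients of the fixed elements $A,B$, so that sentence of your sketch should be replaced by the standard induction/comparison argument.
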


For a proof, see [Lu, Theorem 4.4.2]. The proof uses Selberg
Theorem $\la_1\left(\Gamma(m)/\bbh^2\right) \ge \frac{3}{16}$ -
giving a bound on the eigenvalues of the Laplace-Beltrami operator
of the congruence modular surfaces. For a new method see [BG].

Another preliminary result needed is:
\begin{thm} {\normalfont (a)} For a fixed prime $p$, the groups $SL_2(p^k), \;
k \in \bbn$ have a symmetric subset $S_p$ of $p+1$ generators for
which the Cayley graphs $X=Cay (SL_2(p^k), S_p)$ are
$(p+1)$-regular Ramanujan graphs, i.e.  $\la(X) \le
\frac{2\sqrt{p}}{p+1}$.

{\normalfont (b)} The set of generators $S_p$ in part (a) can be
chosen to be of the form $\{ h^{-1} C h\, |\, h\in H\}$, where $C$
is some element of $SL_2(p^k)$ and $H$ is a fixed non-split torus
of $GL_2(p)$. (The proof will give a more detailed description of
$S_p$).
\end{thm}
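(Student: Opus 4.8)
The plan is to construct the Ramanujan graphs explicitly as quotients of the Bruhat–Tits tree of $PGL_2$ over a local field of characteristic $p$, rather than to chase eigenvalue estimates directly. The key point is that the explicit Ramanujan graphs of \cite{LSV2} arise as Cayley graphs of $PGL_2(\bbf_q)$-type groups acting on the tree attached to $PGL_2$ over $\bbf_p((t))$ (or over a function field completion). First I would fix the prime $p$ and recall that the $(p+1)$-regular tree $\calt$ is the Bruhat–Tits building of $PGL_2(F)$, where $F=\bbf_p((t))$; the stabilizer of a vertex is $PGL_2(\bbf_p[[t]])$ and the $p+1$ neighbours of a vertex correspond to the $p+1$ points of $\mathbb{P}^1(\bbf_p)$. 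A lattice $\Gamma$ acting on $\calt$ with the right congruence structure yields finite quotients $\Gamma\backslash\calt$ whose fundamental groups surject onto $SL_2(p^k)$ for varying $k$, and by the Ramanujan property established via Drinfeld's solution to the characteristic-$p$ Ramanujan conjecture (\cite{Dr}), these quotient graphs satisfy $\la\le \frac{2\sqrt p}{p+1}$. This gives part (a): the generating set $S_p$ is read off from the $p+1$ adjacency directions, giving a symmetric set of size $p+1$ and a $(p+1)$-regular Cayley graph.

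For part (b) the aim is to identify the arithmetic of $S_p$ with an explicit torus structure. I would exploit that the $p+1$ neighbours of the base vertex are permuted simply transitively (up to the correct normalization) by a non-split torus $H$ of $GL_2(\bbf_p)$, since a non-split torus acts on $\mathbb{P}^1(\bbf_p)$ with two fixed points in $\mathbb{P}^1(\bbf_{p^2})\setminus\mathbb{P}^1(\bbf_p)$ and transitively on the $\bbf_p$-rational points. Concretely, one writes each neighbour direction as $h^{-1}Ch$ for a single element $C$ (corresponding to one fixed edge direction, e.g. a matrix realizing a shift by $t$) conjugated through $h$ ranging over $H$. Verifying that $\{h^{-1}Ch\mid h\in H\}$ has exactly $p+1$ elements and is symmetric amounts to checking that $H$ has order $p+1$ (the order of a non-split torus in $PGL_2(\bbf_p)$, i.e. the norm-one elements of $\bbf_{p^2}^\times$ modulo scalars) and that $C$ is chosen so that the conjugation orbit is closed under inversion — this follows from choosing $C$ to be an involution-type or order-controlled element compatible with the edge-inversion on $\calt$.

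The main obstacle is not the eigenvalue bound, which is handed to us by \cite{Dr} and \cite{LSV2}, but rather the bookkeeping that makes part (b) precise: one must pin down the exact element $C\in SL_2(p^k)$ and verify that conjugating by the fixed non-split torus $H\subset GL_2(\bbf_p)$ reproduces \emph{exactly} the $p+1$ generators coming from the tree structure, with the correct symmetry. This requires matching the abstract tree-neighbour combinatorics with the matrix description of the $PGL_2(\bbf_p[[t]])/PGL_2(\bbf_p[t])$-type coset geometry, and tracking how the torus $H$ acts through the explicit construction of \cite{LSV2}. I would carry this out by first establishing (a) cleanly from the cited Ramanujan-complex construction, and then deriving (b) as a structural refinement, isolating $C$ and $H$ so that the description transports uniformly in $k$ — uniformity in $k$ being exactly what is needed downstream for the expander application.
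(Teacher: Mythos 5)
Your outline follows the same route as the paper: both obtain the graphs as congruence quotients attached to an arithmetic lattice in $PGL_2(\bbf_p((y)))$, get the eigenvalue bound from Drinfeld's theorem via the explicit Ramanujan complexes of [LSV2], and describe $S_p$ as the orbit of a single element under a non-split torus of order $p+1$. However, the plan has two genuine gaps. First, the step that turns a Ramanujan quotient of the $(p+1)$-regular tree $\mathcal{T}$ into a Ramanujan \emph{Cayley graph} of $SL_2(p^k)$ is missing. You write that the quotients have ``fundamental groups surjecting onto $SL_2(p^k)$,'' but that is not the mechanism: what is needed is a lattice $\Gamma$ acting \emph{simply transitively on the vertices} of $\mathcal{T}$, so that $\Gamma(I)\backslash\mathcal{T}$ is literally $Cay(\Gamma/\Gamma(I);S)$ with $S$ the set of elements carrying the base vertex to its $p+1$ neighbours. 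This simple transitivity is exactly the Cartwright--Steger property, and it is the reason the paper must use the explicit cyclic-algebra lattice $\Gamma=\langle b_u\rangle\subset A(R)^*/R^*$ of [LSV2] (where $b=1+z^{-1}$ and $b_u=ubu^{-1}$ for $u\in\bbf_{p^2}^*/\bbf_p^*$) rather than an unspecified ``lattice with the right congruence structure.'' A generic congruence lattice in $PGL_2(\bbf_p((t)))$ does not act simply transitively on vertices and does not produce Cayley graphs of its congruence quotients.

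Second, you never verify that the finite quotient group is actually $(P)SL_2(p^k)$ for every $k$. In the paper this is supplied by Theorem 7.1 of [LSV2]: for a prime ideal $I\triangleleft R$ with $R/I\cong\bbf_{p^k}$, the quotient $\Gamma/\Gamma(I)$ lies between $PSL_2(p^k)$ and $PGL_2(p^k)$, and for $p^k>17$ the ideal can be chosen so that one gets exactly $PSL_2(p^k)$; one then passes from $PSL_2$ to $SL_2$ via the central-extension statement (Proposition 2.3). Without this identification the construction yields Ramanujan Cayley graphs of \emph{some} finite groups, not of the groups named in the statement. By contrast, your part-(b) analysis --- the non-split torus $\bbf_{p^2}^*/\bbf_p^*$ of order $p+1$ acting simply transitively on the neighbours of the base vertex, with $S_p$ the conjugation orbit of a single element $C$ --- is correct and is exactly how the paper reads off the form of the generators; note only that the symmetry of $S_p$ for $d=2$ falls out of the construction (the two edge colours coincide when $d=2$, so $b^{-1}$ is itself a torus conjugate of $b$) rather than from choosing $C$ to be ``involution-type.''
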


Before proving Theorem 3.2, let us mention that part (a) has
already been proven by Morgenstern [Mo], but the specific form of
the generators as in (b) is crucial for our needs.  We therefore
apply to [LSV2] instead of [Mo].  We recall the construction
there: Let $\bbf_q$ be the field of order $q$ (a prime power),
$\bbf_{q^d}$ the extension of dimension $d$ and $\phi$ a generator
of the Galois group $Gal(\bbf_{q^d}/\bbf_q)$. Fix a basis $\{
\xi_0,\ldots, \xi_{d-1}\}$ for $\bbf_{q^d}$ over $\bbf_q$ where
$\xi_i = \phi^i (\xi_0)$. Extend $\phi$ to an automorphism of the
function field $k_1 = \bbf_{q^d}(y)$ by setting $\phi(y)=y$; the
fixed subfield is $k=\bbf_q(y)$, of codimension $d$.

Following the notation in [LSV2], we will denote by $R_T$ the ring
$\bbf_q [y, \frac{1}{1+y}]$ and for every commutative
$R_T$-algebra (with unit) $S$, we denote by $y$ the element $y\cdot
1 \in S$.  For such $S$ one defines an $S$-algebra $A(S) =
\mathop{\oplus}\limits^{d-1}_{i, j = 0} S\xi_i z^j$ with the
relations $z\xi_i = \phi(\xi_i) z$ and $z^d= 1+y$. Let $R = F_q
[y, \frac 1y, \frac{1}{1+y}] \subseteq k$ and denote $b = 1+z^{-1}
\in A(R)$. For every $u \in \bbf_{q^d}^*\subset A(R)^*$, we denote
$b_u = ubu^{-1}$.  As $\bbf_q^*$ is in the center of $A(R), b_u$
depends on the coset of $u$ in $\bbf_{q^d}^*/\bbf_q^*$.  This
gives $\frac{q^d-1}{q-1}$ elements $\{ b_u| u\in
\bbf_{q^d}^*/\bbf_q^*\}$ of $A(R)^*$. The subgroup of $A(R)^*$
generated by the $b_u$'s is denoted $\tilde \Gamma$ and its image
in $A(R)^*/R^*$ by $\Gamma = \Gamma_{d, q}$. For every ideal $I
\tle R$, we get a map
$$\pi_I: A(R)^*/R^* \to A(R/I)^* / (R/I)^*.$$
The intersection $\Ga\cap \Ker \pi_I$ is denoted $\Ga(I)$.

Theorem 6.2 of [LSV2] says:
\begin{thm} For every $d\ge 2$ and every $0 \neq I\tle R$, the
Cayley complex of $\Ga/\Ga(I)$ is a Ramanujan complex.
\end{thm}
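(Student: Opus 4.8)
The plan is to exhibit the finite complexes $\Ga/\Ga(I)$ as congruence quotients of the Bruhat--Tits building of $PGL_d$ over a local field, and then to read off the Ramanujan property from the \emph{temperedness} of the automorphic representations that enter the spectral decomposition of functions on these quotients. First I would analyze $A(k)$, the base change of the construction to $k=\bbf_q(y)$: the defining relations $z\xi_i=\phi(\xi_i)z$ and $z^d=1+y$ exhibit $A(k)$ as the cyclic algebra $(k_1/k,\phi,1+y)$ of degree $d$, with $k_1=\bbf_{q^d}(y)$. The first task is a Hasse-invariant computation showing that $A(k)$ is a \emph{division} algebra — equivalently that $1+y$ has order exactly $d$ modulo norms from $k_1^*$ — and that its ramification is confined to the places inverted in $R$, namely among $\{y=0,\;y=-1,\;y=\infty\}$. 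Since the local invariants must sum to zero, $D:=A(k)$ ramifies at two of these and \emph{splits} at a distinguished third place $v$ (a degree-one place, so with residue field $\bbf_q$); there $D\otimes_k k_v\cong M_d(k_v)$, and $D^*$ acts on the building $\mathcal{B}=\mathcal{B}_{d-1}(PGL_d(k_v))$.

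Next I would identify $\Ga$ — the image in $A(R)^*/R^*$ of the group generated by the $b_u$ — as an $S$-arithmetic cocompact lattice in $PGL_d(k_v)$, acting simply transitively on the vertices of $\mathcal{B}$ (consistently with there being $(q^d-1)/(q-1)$ generators $b_u$, which is the number of neighbours of a vertex of a fixed adjacent type). Here the factors of $D^*$ at the ramified places are compact because $D$ is division, which is exactly what forces cocompactness and makes each $\Ga(I)\backslash\mathcal{B}$ a \emph{finite} complex; and $\Ga(I)=\Ga\cap\Ker\pi_I$ is the principal congruence subgroup cut out by $I$, so the Cayley complex of $\Ga/\Ga(I)$ is $\Ga(I)\backslash\mathcal{B}$. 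Strong approximation for the simply connected cover of $D^*$ then identifies functions on the vertices of this finite complex with a space of automorphic forms on $D^*$ over $k$, in a way under which the colored adjacency operators $A_1,\dots,A_{d-1}$ of $\mathcal{B}$ go over to the spherical Hecke operators at $v$. Thus the spectrum of the complex is controlled by the local components at $v$ of these automorphic representations.

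The crux is to control those local components, and this is where the whole difficulty resides. By the Jacquet--Langlands correspondence the relevant automorphic representations of $D^*(\bba)$ transfer to automorphic representations of $GL_d(\bba_k)$, and — because $D$ is a division algebra — every infinite-dimensional constituent transfers to a \emph{cuspidal} one. I would then invoke the Ramanujan--Petersson conjecture for $GL_d$ over function fields (Drinfeld \cite{Dr} for $d=2$, Lafforgue in general), which asserts that the local component at each place of such a cuspidal representation is tempered. Everything before this step is a dictionary translating combinatorics into automorphic language; only the Ramanujan conjecture delivers the uniform spectral bound, and it is the single genuinely deep, external ingredient of the argument.

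Finally I would convert temperedness into the combinatorial bound. Via the Satake isomorphism, an unramified tempered representation of $PGL_d(k_v)$ has Satake parameters on the unit circle, and the eigenvalue of each operator $A_i$ on the corresponding Hecke eigenfunction is an elementary symmetric expression in those parameters; estimating it — the same estimate producing the bound $\frac{2\sqrt{q}}{q+1}$ when $d=2$ — yields exactly the inequality defining a Ramanujan complex. It then remains only to separate off the one-dimensional automorphic representations, which correspond to the finite-dimensional constituents on the $D^*$ side and account for the trivial, extremal eigenvalues. With these excluded, the entire nontrivial spectrum of $\Ga/\Ga(I)$ is tempered, which is the assertion of the theorem.
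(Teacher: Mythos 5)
The paper does not prove this statement itself but quotes it as Theorem 6.2 of [LSV2], and your proposal accurately reconstructs the architecture of that proof: realizing $A(k)$ as a cyclic division algebra split at the degree-one place $y=0$, $\Ga$ as a cocompact lattice acting simply transitively on the building of $PGL_d(\bbf_q((y)))$, and passing via Jacquet--Langlands and Drinfeld/Lafforgue to temperedness and the Satake-parameter bound --- precisely the ingredients the paper itself lists in Remarks 3.5 and 3.6. So your approach is essentially the same as the cited one, and correct.
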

The reader is referred to [LSV1] and [LSV2] for the precise
definition of Ramanujan complex and for the precise complex
structure of $\Ga/\Ga(I)$.  What is relevant for us here
is that this gives a spectral gap on the Cayley
graph of $\Ga/\Ga(I)$ with respect to the $\frac{q^d -1}{q-1} $
generators $ S = \{ b_u| u \in \bbf_{q^d}^*/\bbf_q^*\}$.

When $d=2$,\, $S$ is a symmetric set of generators of $\Gamma$ and
so $Cay(\Ga/\Ga(I); S)$ is a $k=(q+1)$-regular graph. When $d \ge
3$, $S \cap S^{-1} = \emptyset$ and $Cay (\Ga/\Ga(I);S)$ is a $k =
\frac{2(q^d - 1)}{q-1}$-regular graph. Let $A$ be its adjacency
matrix and $\Delta = \frac 1k A$ the normalized one.  Theorem 3.3
implies:
\begin{cor} Denote by $\mu_d$-the roots of unity in $\bbc$ of
degree $d$ and $E_d=\{ \frac{y +\bar y}{2}| y \in \mu_d\}$.  Let
$\lambda$ be an eigenvalue of $\Delta$.  Then either $\lambda \in
E_d$ or $|\lambda| \le \frac{dq^{(d-1)/2}}{(q^d-1)/(q-1)}$.
\end{cor}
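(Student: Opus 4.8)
The plan is to relate the eigenvalues of the graph Laplacian $\Delta$ on $Cay(\Ga/\Ga(I);S)$ to the Satake/Hecke eigenvalues controlled by the Ramanujan property from Theorem 3.3, and to extract from this the dichotomy stated in the corollary. The Ramanujan bound for a complex is a statement about the action of the full Hecke algebra (or equivalently about the Satake parameters of the relevant automorphic representations), and the graph adjacency operator $A$ associated to the generators $S=\{b_u\}$ is one specific Hecke operator. So the first step is to identify $A$ (equivalently $\Delta=\frac1k A$) as an operator whose spectrum is governed by these Satake parameters, and to write down how a given automorphic/local representation contributes eigenvalues of $\Delta$.

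First I would separate the two regimes that the Ramanujan property distinguishes. For the Ramanujan complexes of type $\tilde A_{d-1}$, the local building is that of $PGL_d$ over a local field with residue field of size $q$, and the relevant local representations split into the \emph{tempered} ones and the finitely many \emph{degenerate/non-tempered} ones (the trivial and the other one-dimensional or ``exceptional'' constituents). The eigenvalues coming from the degenerate representations are the ``trivial'' part of the spectrum: these are exactly the values obtained when the Satake parameter sits at a $d$-th root of unity, and computing the eigenvalue of the averaging operator $\Delta$ on such a representation gives precisely a number of the form $\frac{y+\bar y}{2}$ with $y\in\mu_d$, i.e.\ a point of $E_d$. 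Thus the exceptional set $E_d$ accounts for the non-tempered contributions.

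Next, for the tempered representations — which is where the force of Theorem 3.3 enters — the Ramanujan bound says the Satake parameters all lie on the unit circle. One then computes $\Delta$ as the normalized sum over the $k$ generators $b_u$, and bounds the eigenvalue by summing the contributions of the (at most $d$ nontrivial) Frobenius/Hecke eigenvalues, each of absolute value $q^{(d-1)/2}$ by temperedness, against the normalizing degree $k=\frac{q^d-1}{q-1}$ (up to the factor $d$ recording the number of such eigenvalues and, when $d\ge3$, the symmetrization into $S\cup S^{-1}$). This yields the stated bound $|\lambda|\le \frac{d\,q^{(d-1)/2}}{(q^d-1)/(q-1)}$. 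I would carry out the counting so that the numerator $d\,q^{(d-1)/2}$ emerges as ``(number of eigenvalues)$\times$(temperedness size)'' and the denominator as the regularity $k$.

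\textbf{The main obstacle} will be the first, structural step: pinning down the precise dictionary between the combinatorial operator $\Delta$ built from $S=\{b_u\}$ and the Hecke-theoretic statement of Theorem 3.3, since the Ramanujan property is phrased for the \emph{complex} rather than for this particular $1$-dimensional adjacency operator. Making that identification rigorous requires knowing how the $b_u$ generate the neighbours in the building and how the associated operator decomposes under the local representation; the clean separation into ``$\lambda\in E_d$'' versus the tempered bound is exactly the combinatorial shadow of the non-tempered/tempered spectral decomposition, and getting the exceptional set to come out as $E_d$ (and not some larger finite set) is the delicate point. Once that dictionary is in hand, the bound itself is a routine estimate of a normalized sum of $d$ unit-modulus eigenvalues divided by $k$.
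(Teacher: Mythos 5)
Your proposal is correct and follows essentially the same route as the paper, which derives the corollary from Theorem 3.3 via the dictionary spelled out in Remarks 3.5--3.6: the Cayley adjacency operator is the Hecke operator $A_1$ (for $d=2$) or $A_1+A_{d-1}$ (for $d\ge 3$) on the quotient $\Ga(I)\backslash\bbb$, temperedness gives $|\lambda(A_1)|\le d\,q^{(d-1)/2}$ against the degree $k=(q^d-1)/(q-1)$, and the exceptional eigenvalues in $E_d$ are exactly the contributions of the non-tempered (one-dimensional) representations, which the paper describes equivalently as lift-ups from the cyclic quotient generated by the image of $C$ in $PGL_d(q^e)/PSL_d(q^e)$. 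The only cosmetic difference is that you phrase the $E_d$ part via Satake parameters at roots of unity while the paper phrases it group-theoretically; these are the same eigenvalues.
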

\begin{remark}
Note that when $d=2,\,  k= |S| = q+1, E_d = \{ \pm 1\}$ and
Corollary 3.4 states that $Cay (\Ga/\Ga(I); S)$ are Ramanujan
graphs. The proof of this bound for $d=2$ requires Drinfeld
theorem (the Ramanujan conjecture for $GL_2$ over positive
characteristic fields). The proof for $d\ge 3$ is based on
Lafforgue's work [La]. It also requires the Jacquet-Langlands correspondence
in positive characteristic - see \cite{LSV1}, Remark 1.6). We mention, however, that for $d \ge 3$,
 quantitative estimates on Kazhdan property $(T)$ for
 $PGL_d(\bbf_q ((y)))$ give a weaker estimate such as: either
 $\lambda \in E_d$ or
 \[ \la \le \frac{1}{\sqrt{q}} + o(1)  \le
 \frac{19}{20},
 \]
which is valid for every $d$ and $q$. But only the case of $d=2$ needs the
deep results from the theory of automorphic forms.
\end{remark}

\begin{remark} The description above of the results from [LSV2]
brings only what is relevant to this paper. The bigger picture is as
follows: The group $A(R)^*/R^*$ is a discrete cocompact lattice in
$A(\bbf_q((y)))^*/\bbf_q((y))^*$.  The latter is isomorphic to
$H=P GL_d(\bbf_q((y)))$ and it acts on its Bruhat-Tits building
$\bbb$. The element $b \in H$ takes the initial point of the
building (the vertex $x_0$ corresponding to the lattice $\bbf_q
[[y]]^d$) to a vertex $x_1$ of distance one from it, where the
color of the edge $(x_0, x_1)$ is also one (so $x_1$ corresponds
to an $\bbf_q[[y]]$-submodule of $\bbf_q[[y]]^d$ of index $q$).
The group $\bbf_{q^d}^*/\bbf_q^*$ acts transitively on these
$(q^d-1)/(q-1)$ neighbors of $x_0$ of this type and the group
$\Ga$ generated by the $b_u$'s acts simply transitive on the
vertices of
 $\bbb$ - a result which goes back to Cartwright and Steger
 [CS]. For $b_u\in S$, $b^{-1}_u$ takes $x_0$ to a neighboring vertex
  of $x_0$ where the edge is of color $d-1$.  When $d=2$,\,  $d-1=1$,
  and $S$ is a symmetric set of size $q+1$ and Corollary 3.4 says
  that $Cay(\Ga/\Ga(I), S)$ are Ramanujan graphs.  For $d\ge 3$,
  $S\cap S^{-1} = \emptyset$ and $Cay (\Ga/\Ga(I), S)$ are regular
  graphs of degree $2|S| = \frac{2(q^d -1)}{q-1}$.
    The Ramanujan complex $\Ga/\Ga(I)$ is in fact isomorphic
 to the quotient $\Ga(I)\backslash \bbb$ of the Bruhat-Tits
 building. On the building $\bbb$ (and on its quotients
 $\Ga(I)\backslash \bbb$) we have an action of $d-1$ Hecke
 operators $A_1, \cdots, A_{d-1}$ and the Ramanujan property
 gives bounds on their eigenvalues. For $d \ge 3$, $A_1 + A_{d-1}$
 is nothing more than the adjacency operator of the Cayley graph
 of $\Ga/\Ga(I)$, and for $d=2, A_1 = A_{d-1}$ and $A_1$ is the
 adjacency operator.
\end{remark}

 The structure of the quotient group $\Ga/\Ga(I)$ is analyzed in
 [LSV2]; If $I$ is a prime ideal of $R$ with $R/I \simeq
 \bbf_{q^e}$, then $\Ga/\Ga(I)$ is isomorphic to a subgroup of
 $PGL_d(q^e)$ containing $PSL_d(q^e)$.  Theorem 7.1 of [LSV2] gives
 a more precise description, showing that essentially all
 subgroups between $PSL_d(q^e)$  and $PGL_d(q^e)$ can be obtained if
 $I$ is chosen properly. The image of $S$ in $PGL_d(q^e)$ which we
 also denote by $S$ is composed of one element $C$, the image of $b$
 in the notations above, and the conjugates of $C$ by the
 non-split tori in $PGL_d(q^e)$ of order $(q^d-1)/(q-1)$.

 Note that $PGL_d(q^e)/PSL_d(q^e)$ is a cyclic group and the image
 of $S$ there is a single element - the image of $C$, since all
 the other elements of $S$ are conjugates of $C$.  The eigenvalues
 in $E_d$ above, may appear as ``lift up" of the eigenvalues of
 the cyclic group generated by $C$ (which is a subgroup of
 $PGL_d(q^e)/PSL_d(q^e)$ and a quotient of $\Ga/\Ga (I)$) whose
 order divides $d$. These eigenvalues will be called ``the trivial
 eigenvalues" of $Cay(\Ga/\Ga(I); S)$ and they are in the subset
 $E_d$ defined in Corollary 3.4.  If the image of $\Ga$ in
 $PGL_d(q^e)$ is only  $PSL_d(q^e)$, then $1$ is the only trivial
 eigenvalue of $\Delta$ and all the others satisfy the bound of
 Corollary 3.4.

 For $d$ large the issue of which subgroup of $PGL_d(q^e)$ is
 obtained is somewhat delicate. For $d=2$, which is what is needed here for
 the proof of Theorem 3, Theorem 7.1 of [LSV2] ensures
 that for $p^e > 17, PSL_2 (p^e)$ can be obtained, if $I$ is
 chosen properly. Thus \newline $Cay (PSL_2 (p^e), S)$ are $(p+1)$-regular
 Ramanujan graphs. They are, therefore, also $\vare$-expanders by
 Proposition 2.1(ii), but with an unbounded number of generators.

 We now show that this is true also with a bounded number of
 generators. An explicit form of Theorem 1.3 is:

 \begin{thm}  The family of Cayley graphs $Cay(PSL_2(\ell); \{ A,
 B, C, C'\})$, when $\ell = p^e$ is any prime power, are uniformly
 expanders. (Here $A = \begin{pmatrix} 1 &1\\0&1\end{pmatrix}, B
 = \begin{pmatrix} 0 &1\\ -1 &0\end{pmatrix}$,  $C$ is as in the
 description above when $d=2$ and $q=p$ and $C'$ will be described in the proof).
 \end{thm}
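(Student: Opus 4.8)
The plan is to verify the property $I(\a, PSL_2(\ell),\{A,B,C,C'\})$ with a constant $\a$ independent of $\ell=p^e$, and then read off the uniform $\vare$-expander conclusion from Proposition 2.1(i). Two inputs are already in hand. First, by Theorem 3.1 the pair $\{A,B\}$ makes $PSL_2(p)$ a uniform expander, and since $\bbf_p\subseteq\bbf_{p^e}$ embeds $PSL_2(p)$ in $PSL_2(\ell)$, Proposition 2.1(iii) supplies a constant $\a_0$, independent of $p$, such that any unit vector $v$ in a unitary representation $\rho$ of $PSL_2(\ell)$ with $\|\rho(A)v-v\|,\|\rho(B)v-v\|<\delta$ satisfies $\|\rho(g)v-v\|\le\a_0\delta$ for every $g\in PSL_2(p)$. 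Second, by Theorem 3.2 and Corollary 3.4 the Cayley graph of $PSL_2(\ell)$ on the full set $S_p=\{h^{-1}Ch\mid h\in H\}$ of $p+1$ conjugates is Ramanujan, with $\la\le\frac{2\sqrt p}{p+1}$; this is bounded away from $1$ uniformly in $p$, so the unbounded-$S$ form of Proposition 2.1(ii) gives a uniform $\a_1$ turning $S_p$-almost invariance into $PSL_2(\ell)$-almost invariance. The entire content of the proof is the reduction of the unbounded set $S_p$ to the four elements $\{A,B,C,C'\}$.

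To define $C'$ and carry out the reduction I would argue as follows. By Theorem 3.2(b) the elements of $S_p$ are the conjugates of the single element $C$ by the image $\bar H$ of the non-split torus $H$ of $GL_2(p)$, a cyclic group of order $p+1$ inside $PGL_2(p)$. Since $[PGL_2(p):PSL_2(p)]=\gcd(2,p-1)\le 2$, the subgroup $\bar H\cap PSL_2(p)$ has index at most $2$ in $\bar H$; I fix a representative $t_0$ of the nontrivial coset (when it exists) and set $C'=t_0^{-1}Ct_0$, which lies in $PSL_2(\ell)$ because conjugation by $PGL_2(p)\subseteq PGL_2(\ell)$ preserves $PSL_2(\ell)$. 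Then every $h\in\bar H$ is $h=t_0^{\epsilon}s$ with $\epsilon\in\{0,1\}$ and $s\in\bar H\cap PSL_2(p)$, so that
\[
h^{-1}Ch \;=\; s^{-1}\bigl(t_0^{-\epsilon}Ct_0^{\epsilon}\bigr)s
\]
is either $s^{-1}Cs$ or $s^{-1}C's$, with the conjugating element $s$ now lying in $PSL_2(p)$.

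Now suppose $v$ is a unit vector that is $\delta$-almost invariant under $\{A,B,C,C'\}$. A telescoping estimate built from the triangle inequality and the unitarity of $\rho$ gives, for each $h\in\bar H$,
\[
\|\rho(h^{-1}Ch)v-v\|\;\le\;2\|\rho(s)v-v\|+\|\rho(C)v-v\|\quad\text{or}\quad 2\|\rho(s)v-v\|+\|\rho(C')v-v\|.
\]
Since $s\in PSL_2(p)$, the first input bounds $\|\rho(s)v-v\|\le\a_0\delta$, while $\|\rho(C)v-v\|,\|\rho(C')v-v\|<\delta$ as $C,C'$ are generators; hence $v$ is $(2\a_0+1)\delta$-almost invariant under every element of $S_p$. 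The second input then makes $v$ almost invariant under all of $PSL_2(\ell)$, so $I(\a,PSL_2(\ell),\{A,B,C,C'\})$ holds with $\a=\a_1(2\a_0+1)$, uniform in $p$ and $e$. Along the way the displayed identity shows each $h^{-1}Ch\in\langle A,B,C,C'\rangle$, so $\{A,B,C,C'\}$ indeed generates $PSL_2(\ell)$ and Proposition 2.1(i) applies. The finitely many $\ell$ with $p^e\le 17$, where Theorem 7.1 of [LSV2] does not produce $PSL_2(\ell)$, are expanders trivially and may be set aside.

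I expect the genuine obstacle to be the middle step, not the estimates. A cyclic torus of order $p+1$ is itself a terrible expander, so it is not obvious that the $p+1$ conjugating elements can be absorbed into a bounded generating set at all. What rescues the argument is purely group-theoretic: the conjugating group $\bar H$ sits inside $PSL_2(p)$ up to index $2$, and $PSL_2(p)$ is already a uniform expander by Selberg's theorem, so conjugation by the index-$2$ part is controlled by $\{A,B\}$ and the single leftover coset forces exactly one extra generator $C'$. The points demanding care are that $t_0$ (hence $C'$) can always be chosen inside $\bar H$ to exhaust $\bar H$ modulo $\bar H\cap PSL_2(p)$, and that every constant stays uniform — in particular that $\tfrac{2\sqrt p}{p+1}$ remains bounded away from $1$ for all primes $p$, which it does.
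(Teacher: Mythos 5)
Your proposal is correct and follows essentially the same route as the paper: control the conjugating torus by noting it lies in $PSL_2(p)$ up to index $2$ (which is where $C'$ comes from), use Theorem 3.1 with Proposition 2.1(iii) to pass from $\{A,B\}$-almost invariance to $PSL_2(p)$-almost invariance, recover almost invariance under the full unbounded Ramanujan generating set $S$, and finish with the spectral gap via Proposition 2.1(ii) and (i). Your version merely makes explicit the coset decomposition $h=t_0^{\epsilon}s$ and the telescoping constants that the paper leaves implicit in the phrase ``almost invariant under $PSL_2(p)\cdot C\cdot PSL_2(p)$''.
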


 \begin{proof} Let $C$ be as described above (with $d=2$ and $q=p$
  a prime).  The image of $S$ in $PSL_2(p^e)$ as described above,
  is the set of conjugates of $C$ under the action of the
  non-split torus $T$ of $PGL_2(p)$ which is isomorphic to $
  \bbf^*_{p^2}/ \bbf^*_p$ and of order $p+1$.  Denote $T_1 = T\cap
  PSL_2(p)$ a subgroup of index at most 2 in $T$ (in fact index 2,
  unless $p=2$). Let $C$ and $C'$ be two representatives of the
  orbits of $S$, under conjugation by $T_1\colon C$ as before and $C'$
  a representative of the other orbit (if exists).

  We can now prove the Theorem by using Proposition 2.1: Let $(V,\rho)$ be a
 unitary representation of $PSL_2(\ell)$, with an $\{ A, B,
 C, C'\}$-almost invariant vector $v$. Restrict the representation
 $\rho$ to the subgroup $PSL_2(p)$. By Theorem 3.1, $Cay
 (PSL_2(p); \{ A, B\})$ are expanders and hence by Proposition 2.1(iii),
 $v$ is $PSL_2(p)$ almost invariant. As it is also $C$-almost invariant,
 it is almost invariant under the set $PSL_2(p)\cdot C\cdot
 PSL_2(p)$ and similarly with $PSL_2(p) \cdot
 C'\cdot PSL_2(p)$. The union of these last two sets contain $S$. So, $v$ is $S$-almost
 invariant. But $\newline \la(Cay (PSL_2(p^e), S)) \le
 \frac{2\sqrt{p}}{p+1} < \frac{19}{20}$ for every $p$ and $e$, so
 by Proposition 2.1(ii), $v$ is $PSL_2(p^e)$-almost invariant
 and by Proposition 2.1(i), $Cay (PSL_2(p^e); \{ A, B, C, C'\})$
 are uniform expanders. This finishes the proof of Theorem 3.7
 (and hence also of Theorem 1.3).
 \end{proof}

 Recall that by Proposition 2.3, Theorem 3.7 also says that the
 family $\{ SL_2(\ell)|\ell$ $\text{\ a prime\  power} \}$ is
 family of expanders.  Let us now quote:
 \begin{thm}[Hadad {[H1]}, Theorem 1.2]. Let $R$ be a finitely generated ring with
 stable range $r$ and assume that the group $EL_d(R)$ for some $d
 \ge r$ has Kazhdan constant $(k_0, \vare_o)$.  Then there exist
 $\vare = \vare (\vare_0) > 0$ and $k = k(k_0) \in\bbn$ such that
 for every $n\ge d, EL_n(R)$ has Kazhdan constant $(k, \vare)$.
\end{thm}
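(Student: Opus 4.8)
The plan is to reformulate everything in terms of almost-invariant vectors, as in Proposition 2.1, and to bootstrap from the single group $EL_d(R)$ to all $EL_n(R)$ by combining a uniform relative property (T) for the unipotent blocks with a bounded-generation theorem coming from the stable-range hypothesis. Fix a generating set $S_0$ of $EL_d(R)$ witnessing the Kazhdan constant $(k_0,\vare_0)$, and realize it inside the top-left $d\times d$ corner of $EL_n(R)$ for each $n\ge d$. I would build a generating set $S$ of $EL_n(R)$ of size bounded in terms of $k_0$ (and $d$) alone, by adjoining to this copy of $S_0$ a bounded number of elementary permutation matrices that cyclically shift coordinates; these shifts conjugate the corner copy of $EL_d(R)$ onto the other $d$-element coordinate blocks and onto the root subgroups, so that $\langle S\rangle = EL_n(R)$.

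First I would prove the key lemma: the semidirect product $EL_d(R)\ltimes M_{d,m}(R)$, where $EL_d(R)$ acts by left multiplication on the module $M_{d,m}(R)$ of $d\times m$ matrices, has relative property (T) with a Kazhdan pair depending only on $(k_0,\vare_0)$ and $d$, and in particular independent of $m$. Since $M_{d,m}(R)\cong (R^d)^{\oplus m}$ as an $EL_d(R)$-module, a character-theoretic (spectral measure) analysis of the unitary dual of the abelian group $M_{d,m}(R)$ reduces the existence of almost-invariant vectors to the presence of a nonzero $EL_d(R)$-almost-fixed point in a single copy of the dual of $R^d$; the assumed Kazhdan constant of $EL_d(R)$ rules this out with a bound that does not see $m$. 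This uniformity in $m$ is precisely what keeps the final constants free of $n$.

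With the lemma in hand, let $v$ be an $S$-almost-invariant unit vector for a unitary representation $(V,\rho)$ of $EL_n(R)$. The Kazhdan constant of the corner $EL_d(R)$ makes $v$ almost invariant under that block; the key lemma then makes $v$ almost invariant under the abelian group of elementary matrices supported on the first $d$ rows and the remaining columns, and, after conjugating by the shift elements of $S$, under every analogous unipotent block. It remains to pass from almost invariance under these unipotent subgroups to almost invariance under all of $EL_n(R)$. Here I would invoke the bounded elementary generation guaranteed by the stable-range condition (Bass--Vaserstein): because $R$ has stable range $r$ and $n\ge d\ge r$, every element of $EL_n(R)$ is a product of a number of such unipotent factors bounded solely in terms of $r$ and $d$. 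Almost invariance under each factor of a bounded product gives almost invariance under the product (as in Corollary 2.2), so $v$ is $EL_n(R)$-almost invariant with constants independent of $n$, which is the desired uniform Kazhdan constant $(k,\vare)$.

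The main obstacle is the uniformity in $m$ of the relative property (T) in the key lemma: one must show that the spectral gap for the $EL_d(R)$-action on $(R^d)^{\oplus m}$ does not deteriorate as $m\to\infty$. The decomposition into $m$ identical copies of the standard action is what saves the argument, since an almost-invariant vector for the semidirect product concentrates its spectral mass near the trivial character of $M_{d,m}(R)$, and the distance from there is controlled copy-by-copy by the fixed Kazhdan constant of $EL_d(R)$; carrying this reduction out uniformly in $m$, while simultaneously keeping the bounded-generation count dependent only on $r$ and $d$, is the technical heart of Hadad's theorem.
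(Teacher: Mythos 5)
First, a point of reference: the paper does not prove this statement at all --- it is quoted verbatim from Hadad's paper and the text says explicitly ``We refer the reader to [H1] for the proof.'' So there is no in-paper argument to compare against; what you have sketched is, in outline, the Kassabov--Shalom--Hadad strategy that [H1] actually follows (uniform relative property (T) for block-unipotent pairs plus bounded generation coming from the stable range). The overall plan is therefore the right one, but as written it has genuine gaps at exactly the two places where the theorem is hard.

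The first gap is in your key lemma. As you have set it up, the pair is $\bigl(EL_d(R)\ltimes M_{d,m}(R),\ M_{d,m}(R)\bigr)$ with $EL_d(R)$ acting by left multiplication. This action preserves columns, so $M_{d,m}(R)\cong (R^d)^{\oplus m}$ needs at least $m$ generators as an $EL_d(R)$-module; consequently the semidirect product has no generating set of size independent of $m$, and a relative Kazhdan constant ``uniform in $m$'' for this pair, with respect to a bounded set, is not even well posed. The actual arguments enlarge the acting group so that $M_{d,m}(R)$ becomes boundedly generated as a bimodule (e.g.\ by letting a second block $EL_m$, or a copy of $EL_2$ over the matrix ring $M_d(R)$, act on the other side), and the uniformity in $m$ is then a delicate spectral-measure estimate --- your proposed reduction ``to a single copy of the dual of $R^d$'' is precisely the naive coordinate-by-coordinate argument that loses a factor of $m$ and does not close. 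The second gap is the conjugation step: to move the top-left $d\times d$ corner onto all the other coordinate blocks using powers of a single cyclic shift $\sigma$, you need $\sigma^j$ with $j$ up to roughly $n-d$, and a vector that is $\delta$-invariant under $\sigma$ is only guaranteed to be $j\delta$-invariant under $\sigma^j$; so the almost-invariance you propagate to the far blocks degrades linearly in $n$, destroying the uniformity you are trying to prove. The correct route uses only a \emph{bounded} number of large block-unipotent subgroups attached to the single partition $n=d+(n-d)$ (upper and lower), each controlled by the uniform relative property (T), together with a Dennis--Vaserstein-type decomposition of $EL_n(R)$ as a bounded product of those specific subgroups --- which is where the stable range hypothesis actually enters --- rather than a word-length-unbounded tour through $n/d$ small blocks.
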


 We refer the reader to [H1] for the proof. We only mention here
 that if $R$ is a field then its stable range is 1 and $EL_n(R)$,
 the group of $n\times n$ matrices over $R$ generated by the
 elementary matrices, is $SL_n(R)$. Also
 recall that a finite group $G$ has Kazhdan constant $(k, \vare)$
 if it has a set of generators $S$ of size at most $k$, such that for every
 non-trivial irreducible representation $(V,\rho)$ of $G$ and for
 every $0\neq v \in V$, there exists $s\in S$ such that $\|
 \rho(s) v-v\| \ge \vare \| v \|$. As is well known, this
 implies that $Cay (G;S)$ is $\vare'$-expander for some $\vare'$
 which depends only on $\vare$. All these remarks combined with
 Theorems 3.7 and 3.8 give:
\begin{thm} The groups $\Big\{ SL_n(q)\; \Big| \; \text{all \   } 2 \le
n \in\bbn, \;  q \ \text{ prime\ power\ } \Big\} $ form a family
of expanders uniformly.
\end{thm}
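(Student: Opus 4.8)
The plan is to deduce Theorem 3.9 from the $SL_2$ case (Theorem 3.7) by feeding it into Hadad's stabilization result (Theorem 3.8), with the representation-theoretic dictionary of \S 2 used to pass between the expander language and the Kazhdan-constant language. First I would record that Theorem 3.7, together with Proposition 2.3 (which transfers expansion from $PSL_2$ to its central cover $SL_2$), shows that $\{SL_2(q)\mid q\text{ a prime power}\}$ are uniform expanders: there are a bound $k_0$ and a constant $\vare_0>0$ so that each $SL_2(q)$ has a generating set of size at most $k_0$ whose Cayley graph is an $\vare_0$-expander. Since these generating sets are bounded, Proposition 2.1(iii) lets me reverse the equivalence and read off the ``property $T$'' reformulation: the whole family $\{SL_2(q)\}$ has a single uniform Kazhdan constant $(k_0,\vare_0)$, i.e.\ for every non-trivial irreducible unitary representation and every unit vector some generator moves it by at least $\vare_0$.

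Next I would apply Theorem 3.8 one field at a time. Fix $q=p^e$ and take $R=\bbf_q$, a finitely generated ring of stable range $r=1$; over a field one has $EL_n(\bbf_q)=SL_n(\bbf_q)$ for all $n$, and in particular $EL_2(\bbf_q)=SL_2(\bbf_q)$ carries the Kazhdan constant $(k_0,\vare_0)$ from the first step. Choosing $d=2\ge r$, Theorem 3.8 then produces $\vare=\vare(\vare_0)>0$ and $k=k(k_0)\in\bbn$ such that for every $n\ge 2$ the group $EL_n(\bbf_q)=SL_n(\bbf_q)$ has Kazhdan constant $(k,\vare)$. Finally, the remark following Theorem 3.8 converts this back: a Kazhdan constant $(k,\vare)$ on a finite group yields an $\vare'$-expander Cayley graph with $\vare'$ depending only on $\vare$.

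The hard part is entirely a matter of \emph{uniformity}, and this is where I would be most careful. The output constants in Theorem 3.8 are asserted to depend only on the input data $\vare_0,k_0$ and on the stable range $r=1$, and not otherwise on the particular field $\bbf_q$; this is exactly why Hadad's formulation writes $\vare=\vare(\vare_0)$ and $k=k(k_0)$. Because the input $(k_0,\vare_0)$ is the \emph{single} constant valid across all $q$ coming from Theorem 3.7, the resulting $(k,\vare)$, and hence $\vare'$, is independent of both $n$ and $q$. Thus all the groups $SL_n(q)$ with $2\le n\in\bbn$ and $q$ a prime power are $\vare'$-expanders with a common $\vare'$ and a common bound on the number of generators, which is the assertion of Theorem 3.9. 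The only point demanding attention beyond invoking the quoted results is checking that $EL_2(\bbf_q)=SL_2(\bbf_q)$ holds for every finite field (including $\bbf_2,\bbf_3$), so that the $SL_2$ expander estimate really supplies the hypothesis of Theorem 3.8; this is standard, as $SL_2$ over any field is generated by its elementary matrices.
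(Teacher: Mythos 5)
Your proposal follows essentially the same route as the paper: deduce uniform expansion of $SL_2(q)$ from Theorem 3.7 via Proposition 2.3, translate to a uniform Kazhdan constant using the dictionary of \S 2, apply Hadad's stabilization (Theorem 3.8) with $R=\bbf_q$ of stable range $1$ and $EL_n(\bbf_q)=SL_n(\bbf_q)$, and translate back. Your extra care about the uniformity of the constants in $q$ and about $EL_2(\bbf_q)=SL_2(\bbf_q)$ for small fields is correct and only makes explicit what the paper leaves implicit.
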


\begin{remark} In the proof of Theorem 3.9, we used for $d\geq 3$, Theorem 3.8 of Hadad whose proof was heavily influenced by Kassabov's proof \cite{K1} that all $SL_n(q), n\geq 3$, are expanders. So our proof cannot be considered as a really different proof for $n\geq 3$. In \cite{KLN}, a second very different proof for $SL_n, n\geq 3$ was announced, based on the theory of Ramanujan complexes. But it turns out that the proof sketched there has a mistake, for which the author of the current paper takes full responsibility. The idea there was to handle $SL_d$, $d$ even, say $d=2m$, by using the following argument: Corollary 3.4 above gives a spectral gap with respect to an unbounded subset $S$ of $(P)GL_d(q)$ which consists of conjugates of a single element by a non-split tori $T$. This $T$ as a subgroup of $G=GL_d(q)$ is inside a copy of $H = GL_2(q^m)$. Passing from $G$ to $PGL_d(q)$, $T$ is then in the image $\bar H$ of $H$. We argued there that by dividing by the center, $T\subset \bar H$ and $\bar H$ is isomorphic to $PGL_2(q^m)$. (We then wanted to use Theorem 1.3 for $\bar H$ to deduce that $\bar H$ is an expander and to continue to argue as in the proof of Theorem 3.7). It is not true however, that $\bar H$ is $PGL_2(q^m)$: we divided by the center of $G$ which is of order at most $d$ and not by the center of $H$ which is of order $q^m-1 > > d$. So $\bar H$ has a large abelian quotient and it is far from being an expander.
\end{remark}

\section{Bounded generation by $SL(2)$}

A finite group $G$ is said to be a product of $s$ copies of
$SL_2$, if there exist prime powers $q_i$ and homomorphisms
$\varphi_i: SL_2(q_i) \to G$, $ i = 1, \ldots, s$, such that for
every $g\in G$ there exist $x_i \in SL_2(q_i)$, $ i = 1, \ldots,
s$ with $g = \varphi_1(x_1) \cdot\ldots\cdot \varphi_s(x_s)$.

Theorem 3.7 shows that all the groups $SL_2(q)$ are uniform
expanders (with 4 generators for each one). It now follows
from Corollary 2.2 that for a fixed $s$, all
the groups which are products of $s$ copies of $SL_2$ are uniform
expanders with $4s$ generators. We will now show that this is
indeed the case for all finite simple groups of Lie type of
bounded rank, excluding the groups of Suzuki type.
\begin{thm} There exists a function $f:\bbn\to\bbn$, such that if
$G$ is  a finite simple group of Lie type of rank $r$, but not of
Suzuki type, then it is a product of $f(r)$ copies of $SL_2$.
\end{thm}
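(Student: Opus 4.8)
The plan is to realize $G$ inside the fixed points of a Frobenius endomorphism and to exhibit a \emph{fixed} bounded list of root-$SL_2$ subgroups whose product fills up $G$, using the Bruhat/Gauss ``big cell'' together with a counting (Lang--Weil) or model-theoretic transfer. First I would reduce to the simply connected group: writing $G=\bg_{sc}^F/Z$, where $\bg$ is the simple algebraic group of the given type over $\overline{\bbf}_p$ and $F$ is the (possibly twisted) Frobenius, a decomposition of $\bg_{sc}^F$ as a bounded product of images of $SL_2$'s descends to the quotient $G$ by composing each $SL_2$-map with $\bg_{sc}^F\to G$. Since there are only finitely many Lie types of rank $\le r$, it suffices to bound the number of factors for each fixed type and then let $f(r)$ be the maximum; so I fix one type. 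For each root $\alpha$ of $\bg$ the subgroup $\langle U_\alpha,U_{-\alpha}\rangle$ is an image of $SL_2$, and these ``root $SL_2$'s'' are the factors I will use.

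Next, for the split (untwisted) types I would argue directly. The big cell $\Omega=U^-TU$ is an open subvariety of $\bg$, and as a scheme $U\cong\prod_{\alpha>0}U_\alpha$ and $U^-\cong\prod_{\alpha>0}U_{-\alpha}$ in a fixed order, while for the simply connected group $T=\prod_{i=1}^r\alpha_i^\vee(\mathbb{G}_m)$ since the simple coroots are a $\bbz$-basis of the cocharacter lattice. Hence $\Omega(\bbf_q)$ is literally a product of $2|\Phi^+|+r$ points, each lying in a root $SL_2$ (the torus factor $\alpha_i^\vee(t)=\mathrm{diag}(t,t^{-1})$ sits in $SL_2^{(\alpha_i)}$). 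It remains to pass from $\Omega$ to all of $G$: since $\Omega$ is dense and $\bg$ is connected, $\Omega\cdot\Omega=\bg$ over $\overline{\bbf}_p$, and for a given $g$ the set $\Omega\cap g\,\Omega^{-1}$ is a nonempty open subvariety of the geometrically irreducible group $\bg$, so by Lang--Weil it has an $\bbf_q$-point once $q$ exceeds a bound depending only on the type. Thus $G=\Omega(\bbf_q)\Omega(\bbf_q)$ is a bounded product of root $SL_2$'s for all large $q$, and the finitely many small $q$ are absorbed into $f(r)$ (each such group being finite). A routine point is that one must fix the \emph{sequence} of $SL_2$-maps in advance and allow the chosen $SL_2$-elements to be trivial, padding the list so that the two big-cell factors — and, in the Bruhat form, every Weyl representative $n_w$ as a product of $\le r|\Phi^+|$ simple-reflection elements — can all be realized from the same list.

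For the twisted types this description still works after replacing roots by $F$-orbits of roots, provided each such orbit yields a rank-one subgroup containing a copy of $SL_2$ over an extension field $\bbf_{q^k}$ with $k\le 3$; this is the step I would route through the \textbf{model-theoretic argument}. Each family $\{\bg^F(\bbf_q)\}_q$ of a fixed (twisted) type is uniformly interpretable in the corresponding family of finite difference fields $(\bbf_q,\Phi)$, so the assertion ``$G$ is a product of $s$ copies of $SL_2$'' becomes a single first-order sentence in the difference field. I would verify this sentence over the pseudofinite (difference) fields by reducing it to the geometry over $\overline{\bbf}_p$: the Lie algebras of the root $SL_2$'s span $\mathfrak{g}$, so the multiplication map from a bounded product of them has surjective differential at the identity, is therefore dominant, and its image contains a dense open set whose square is all of $\bg$; the PAC property of pseudofinite fields (equivalently, the twisted Lang--Weil estimates) then supplies rational points in the relevant fibers. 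An Ax-type transfer from all pseudofinite fields to all but finitely many finite fields completes the uniformity in $q$.

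The \textbf{main obstacle}, and the reason the Suzuki groups must be excluded, is exactly the rank-one analysis for the families arising from special isogenies. The argument requires every $F$-orbit of roots to produce a subgroup that contains an $SL_2$; this holds for the split, quasi-split, and even the Ree families (where an $SU_3$ factor still furnishes an $SL_2$), but the special isogeny defining $^2B_2(q)$ in characteristic $2$ interchanges long and short roots, and since $^2B_2$ has rank one its only rank-one factor is the Suzuki group itself, which contains no copy of $(P)SL_2(q')$ at all — consistent with the fact already noted in the introduction. Technically, the delicate point within the transfer is to ensure that the fibers of the product map (or the intersection $\Omega\cap g\,\Omega^{-1}$) are geometrically irreducible, so that Lang--Weil and the PAC property deliver $\bbf_q$-points \emph{uniformly} in $q$; arranging the factors and their order so that this holds for every type at once is where the bulk of the care lies, and is what the difference-field interpretation packages cleanly.
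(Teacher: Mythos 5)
Your reduction to finitely many types of each bounded rank, your split-case argument via the big cell (each factor of $\Omega=U^-TU$ lying in a root $SL_2$, plus a Lang--Weil count to get $G=\Omega(\bbf_q)\Omega(\bbf_q)$ for large $q$), and your diagnosis of why ${}^2B_2$ must be excluded are all sound; for untwisted types and for twisted types with a fixed vertex this is essentially the explicit Liebeck--Nikolov--Shalev route rather than the paper's. But there is a genuine gap exactly at the Ree families, and it is not a technicality. Your mechanism requires every $F$-orbit of roots to yield a rank-one subgroup containing an $SL_2$, so that all the factors of the big cell (equivalently, all the directions needed to make the product map dominant) are covered by $SL_2$'s. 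For ${}^2F_4(2^{2n+1})$ one of the two orbit types gives a Suzuki group $Sz(2^{2n+1})$, not an $SL_2$, so those root subgroups are not covered by any $SL_2$ image; and ${}^2G_2(3^{2n+1})$ has twisted rank one, so its unique ``rank-one subgroup'' is the whole group --- there is no $SU_3$ factor in it, contrary to your parenthetical, and no root $SL_2$ at all. As written, your argument does not even produce a single copy of $(P)SL_2$ inside ${}^2G_2$, let alone a spanning family of them.

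The paper's proof sidesteps this by needing only \emph{one} uniformly definable copy of $(P)SL_2(F)$ in $G(F)$ --- obtained for ${}^2F_4$ from the one good root, and for ${}^2G_2$ as the set of commutators of the centralizer of an involution, $C_G(\tau)\cong\langle\tau\rangle\times PSL_2(3^{2n+1})$ --- and then invoking simplicity of the ultraproduct $G(K)$ over the PAC field $K$ together with Hrushovski--Pillay's theorem that such a simple group is a product of boundedly many conjugates of a definable subgroup; the resulting elementary statement is transferred back to almost all finite fields by {\L}o\'s's theorem. That key lemma (bounded generation by \emph{conjugates} of a single definable $SL_2$) is what your proposal is missing; without it you are forced to cover every root direction by an $SL_2$, which is impossible for the Ree types. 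A secondary caution: your dominance argument via ``the Lie algebras span $\mathfrak{g}$, hence the differential is surjective'' is unreliable in characteristics $2$ and $3$, precisely where ${}^2F_4$ and ${}^2G_2$ live; the robust substitute is the constructible-image and dimension-growth argument for products of generating connected subgroups.
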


Before giving the proof we remark that Theorem 4.1 combined with
Theorem 3.9 and the result of Nikolov \cite{N} implies Theorem
1.1. Indeed, by \cite{N}, a classical group of Lie type is a
bounded product of groups of type $SL_n(q)$ ($n$ and $q$ varies)
and so by Theorem 3.9 they are uniform expanders.  The other
finite simple groups of Lie type  have bounded rank and so are
bounded product of $SL_2$ by Theorem 4.1, and hence also uniform
expanders. This excludes, of course, the Suzuki groups for
which every homomorphism from $SL_2(q)$ to a Suzuki group has
trivial image since the order of the latter is not divisible by 3.
Thus the validity of Theorem 1.2 for the Suzuki groups is left
open.

Back to Theorem 4.1. This result has been announced in \cite{KLN}
and a model theoretic proof based on the work of Hrushovski and
Pillay \cite{HP} was sketched there.  Recently,  Liebeck, Nikolov and Shalev
\cite{LNS}   proved the theorem by standard group
theoretic arguments. This is somewhat more technical and requires
some case by case analysis but has the advantage that they came
out with an explicit function $f(r)$ which is valid for every
group $G$ of rank $r$. This is of importance for our application
for expanders as it enables one to deduce explicit $k$ and $\vare$
in Theorem 1.1.

Anyway, we will bring here the model theoretic proof. For a nice introduction to the model theory of
finite simple groups, see \cite{W}.  As there are only finitely
many group types of bounded rank, we can take $G$ to be a fixed
(twisted or untwisted) Chevalley group and we need to prove the
result for the groups $G(F)$ when $F$ is a finite field.  We will
show below that each such $G(F)$ contains a copy of $(P) SL_2(F)$ as a
{\it uniformly  definable subgroup} . By a definable subgroup, we mean a
subgroup that can be defined using a first order sentence in the
language of rings with a distinguished endomorphism  - the language
in which $G$ is defined. By uniformly definable we mean that the
subgroup $(P)SL_2(F)$ is defined by a single sentence  -
independent of $F$.

Assuming this fact, we can argue as follows: Let $F_i$ be an
infinite family of finite fields and $K= (\Pi F_i)/U$ an ultra
product of them, i.e., $U$ is a non-principle ultra-filter.  Thus
$K$ is a pseudo-algebraically closed field (PAC, for short -- see
\cite{FJ} and \cite{HP}).
Let
$\tilde G = \Pi G(F_i)/U$ the corresponding ultra product of the
groups $G(F_i)$. By a basic result (Point \cite{P}, Propositions 1
and 2 and Corollary 1) $\tilde G$ is a simple group isomorphic to
$G(K)$ and similarly the ultra-product of the $(P)SL_2(F_i)$'s
gives a subgroup $(P)SL_2(K)$ of $\tilde G = G(K)$.

Now, as $\tilde G = G(K)$ is simple, it is generated by the
conjugates of $(P)SL_2(K)$. By \cite[Proposition 2.1]{HP} $G(K)$
is a product of $m < \infty$ conjugates of $(P)SL_2(K)$.
This is an elementary statement about $G(K)$ and hence it is
true also for $G(F_i)$ for almost all $i$.  This proves what we
need modulo the promised fact.
\begin{remark} The model theoretic proof gives (when one follows
the arguments in \cite{HP}) that $m \le 4\dim G$.  Moreover, in
principal one can give an explicit bound $M$ such that the above
claim is true for every $F$ with $ |F| > M$.  The proof in
\cite{LNS} gives explicit bounds on $m$ which are usually (but not
always) slightly better and are valid for all $F$.
\end{remark}

We are left with proving our claim that $G(F)$ contains a copy of
$(P)SL_2(F)$ as a uniformly definable subgroup.

If $G$ splits (i.e. untwisted type), e.g. $G = E_6$, it contains
$SL_2$ as a  subgroup generated by a root subgroup and its
opposite. Note that a root subgroup is  definable and as $SL_2$ is
a bounded product of the root subgroup and its opposite, it is
also definable.  Of course, in this case it is even an algebraic
subgroup.

If $G$ is twisted, but not a group of Ree type (i.e. all the
simple roots of $G$ are of the same length, so the type is $A_n,
D_n$ or $E_6$), e.g., look at $G(q) = ^{2}\!\!E_6 (q)$. Then $G$ is
the group of points of $E_6(q^2)$ of the following form: $\{ g\in
E_6(q^2)\mid g^{F_{r}} = g^\tau\}$ where $\tau$ is the graph
automorphism of $E_6$ and $F_r$ the Frobenius automorphism. By
restriction of scalars, this is an algebraic group defined over
$\bbf_q$. If the automorphism $\tau$ has a fixed vertex, e.g. for
our example ${}^2E_6$, then ${}^2E_6(q)$ contains a copy of
$SL_2(q)(\subseteq SL_2(q^2)\subseteq E_6(q^2))$ corresponding to
this vertex and as an $\bbf_q$-group- this is an algebraic
subgroup. The argument we illustrated here with ${}^2E_6(q)$ works
equally well with  the other twisted groups with  fixed vertex (of
course, for $^{3}D_4$ we should take $D_4(q^3)$ - but the rest is
the same). This covers all the cases except of $A_n, n$ even. But
${}^2A_n$ is anyway $SU(n+1)$ which contains $SU(2)$ and it is
well known that $SU(2, q^2)$ is isomorphic to $SL_2 (q)$.

We are left with the twisted groups of Ree type:
$^{2}F_4(2^{2n+1})$ and $^{2}G_2(3^{2n+1})$ (the other type
$^{2}B_2(2^{2n+1})$ give the Suzuki groups and these were excluded
from the theorem). Now, $^{2}F_4(2^{2n+1})$ is known (cf. [GLS]
Table 2.4 VI, Table 2.4.7, Theorems 2.4.5 and 2.48) to have a
subgroup generated by a root subgroup and its opposite which is
isomorphic to $SL_2 (2^{2n+1}$).  (This is not the case for all
roots; for some we get the Suzuki groups, but we need only one
root which gives $SL_2$).
 For $^{2}G_2(3^{2n+1})$ one can argue by pure group theoretical
terms: it is known (cf. \cite{E}) to have a unique conjugacy class
of involutions and if $\tau$ is such an involution, then
$C_G(\tau)$ - the centralizer of $\tau$ - is isomorphic to $H =
\langle\tau\rangle\times PSL_2(3^{2n+1})$.  Within $H, PSL_2
(3^{2n+1})$ is the set of all commutators of $H$ (since every
element of $PSL_2(q)$ is a commutator). Thus $PSL_2(3^{2n+1})$ is
a definable subgroup in a uniform way of $^{2}G_2(3^{2n+1})$. The proof of Theorem
4.1 (and hence of 1.1) is now complete.





\vskip .50in

Institute of Mathematics

Hebrew University

Jerusalem, 91904 ISRAEL

alexlub@math.huji.ac.il

\end{document}